\documentclass[12pt]{article}
 \usepackage{amssymb}
\usepackage{amsmath}
\usepackage{bm}
\usepackage[english]{babel}
\usepackage{color}

\usepackage[final]{epsfig}
\usepackage{amsfonts, epsfig, float}
\usepackage{graphicx}
\usepackage{amssymb,latexsym}
\usepackage{amsmath}
\usepackage{stmaryrd}
\usepackage{textcomp}
\usepackage{epsfig, float}

\usepackage{amsbsy}
\usepackage{amsthm}

\newtheorem{proposition}{Proposition}
\newtheorem{theorem}{Theorem}
\newtheorem{remark}{Remark}

\newtheorem{lemma}{Lemma}
\newtheorem{definition}{Definition}

\newtheorem{corollary}{Corollary}
\newcommand{\be}{\begin{equation}}
\newcommand{\ee}{\end{equation}}
\newcommand{\bea}{\begin{eqnarray}}
\newcommand{\eea}{\end{eqnarray}}

\newcommand{\btau}{\bm{\tau}}
\newcommand{\cop}{{\mathbb {COP}}(n)}

\newcommand{\cp}{{\mathbb {CP}}(n)}

\newcommand{\bt}{\boldsymbol{t}}
\newcommand{\by}{\boldsymbol{y}}
\newcommand{\bz}{\boldsymbol{z}}

\newcommand{\bc}{\boldsymbol{c}}
\newcommand{\bd}{\boldsymbol{d}}

\begin{document}

\title{Refined Estimates  on the Dimensions of Maximal Faces of Completely Positive Cones
\thanks{The work was  supported by  the   research program "Convergence-2030"(Republic Belarus), Task  1.07 and  by CIDMA (https://ror.org/05pm2mw36)
under the Portuguese Foundation for Science and Technology, FCT ( https://ror.org/00snfqn58), Grants
UID/04106/2025 (https://doi.org/10.54499/UID/04106/2025) and UID/PRR/04106/2025}}

\author{Kostyukova O.I.\thanks{Institute of Mathematics, National Academy of Sciences of Belarus, Surganov str. 11, 220072, Minsk,
 Belarus  ({\tt kostyukova@im.bas-net.by}).}  Tchemisova T.V.  \thanks{University of Aveiro, Campus Universitário de Santiago, 3800-198, Aveiro, Portugal  ({\tt tatiana@ua.pt})}}

\date{}

\maketitle

\begin{abstract}
The structure of maximal faces of the cone of completely positive matrices is still not well  understood
 in higher dimensions, mainly due to the lack of a general characterization of extreme exposed  rays of the
 copositive cone beyond small matrix orders. This paper contributes to the study of maximal faces of
the cone of completely positive matrices by establishing   sharper bounds  on their dimensions  than those currently available.
For every odd dimension $n$, we prove that the exact lower bound on the dimensions of   maximal faces of the cone of
 $n \times n$ completely positive matrices equals $n$. For even dimensions $n \geq 8$, we derive a new upper
estimate  for this lower bound  and  show that it lies between $n$ and $n+3$. These results substantially refine the previously known bounds.
		
\end{abstract}

\textbf{Keywords:}   Copositive cone,  completely positive cone, extremal copositive matrices,  exposed and  extreme rays   of a cone, maximal  face
of a cone, dimension  of a  maximal face of a cone

\textbf{MSC:} { 52A20, 15A48, 90C22}

\section{Introduction} \label{S.1}

The cones of copositive and completely positive matrices play a central role in  copositive programming (CoP)  that has attracted increasing attention due to its wide range of applications.
Many hard optimization problems, including several NP--hard combinatorial problems, admit exact reformulations
as copositive optimization problems; see, for example, \cite{AH2013,Bomze2012,Dur2010,Burer} and the references therein.
From a structural perspective,  CoP fits naturally into the general framework of convex conic optimization,
 where a convex objective function is minimized over the intersection of  an affine subspace with a convex cone.

 For convex conic optimization problems, a thorough understanding of the facial structure of
 the underlying cones is essential. In particular, facial properties play a fundamental role in the construction
and justification of regularization techniques, such as facial reduction algorithms, in the development of duality theory,
and in the derivation of optimality conditions; see,  {\it e.g.}   \cite{Dickinson, Borwein, Ramana,KT-Mathematics}.

 At present, only partial results are available concerning the
facial structure of the cones of copositive and completely positive matrices.
 Significant progress has been achieved for certain special classes of faces; see, e.g.,
 \cite{Dickinson,KT-Mathematics,Afonin,R-H,Zhang}. The present paper continues this line of research, with a
 particular emphasis on the study of faces of the cone of completely positive matrices.
The geometric properties of this cone, and in particular the structure and dimensions of its maximal faces, are of fundamental importance for copositive optimization, since the primal--dual formulation of a copositive optimization problem naturally involves both the copositive and completely positive cones.

 {Despite their importance, the geometric aspects of the completely positive cones remain poorly understood in higher dimensions.
 This difficulty is closely related to the lack of a general characterization of exposed extreme rays of the copositive cone ${\mathbb{COP}}(n)$
 for matrix orders $n \geq 7$. While for $n \leq 6$ all extreme rays of ${\mathbb{COP}}(n)$ are known \cite{Afonin},  allowing for a complete description of the maximal faces of its dual cone ${\mathbb{CP}}(n)$, such characterizations are not available in general (\cite{Zhang}). Consequently, only limited information is currently known about the maximal faces of ${\mathbb{CP}}(n)$ and their dimensions.}

In  \cite{Dickinson}, the problem of determining    the exact value of the   tight lower bound on
the dimension of a maximal face of the completely positive cone was identified as an open  \ question.
 Since then, this problem has  been investigated by several authors; see, in particular, \cite{Dickinson} and  \cite{Zhang}.
In  \cite{Zhang},  {the} authors    establish   connections between exposed rays of lower-order copositive cones and faces
of higher-order completely positive cones, leading to nontrivial estimates  of the
tight lower bounds on the dimensions of   maximal faces   of the cone $\cp$.

  In this paper, we advance this line of research by deriving sharper
	results on the dimensions of maximal faces of the cone ${\mathbb{CP}}(n)$.	Our  main results establish that the tight lower bound $low(n)$ on the dimensions of maximal faces of ${\mathbb{CP}}(n)$ equals $n$ for all odd $n\geq 5$, and for even $n \ge 6$, we prove that
$n \le low(n) \le n+3.$
The upper bound obtained significantly improves upon previously known results and emphasizes the clear distinction between the odd- and even-dimensional cases.

 The paper is organized as follows. Section~\ref{S.1} provides the introduction.
Section~\ref{S.2} presents the basic notation and definitions related to general convex cones,
as well as to the cones of copositive and completely positive matrices. We also review known representations
of the set of   zeros   of a given copositive matrix
and previously established estimates for the dimensions of maximal faces of completely positive cones.
Sections~\ref{S.3} and~\ref{S.4} contain the main results of the paper, namely refined estimates for
the tight lower bound on the dimensions of maximal faces of completely positive cones.
These sections treat separately the cases of matrix cones of odd and even dimensions. Finally, Section~\ref{S.5} offers concluding remarks.

\section{Basic notations and definitions}\label{S.2}

\subsection{Convex cones, their faces and some properties}

Let us first recall some basic definitions related to convex cones, which apply to general classes of cones, including cones of matrices.

Let $\mathfrak{X}$ be a  finite dimensional  vector  space, where the inner (scalar) product is denoted by $\langle \cdot, \cdot \rangle$  and $\textbf 0$ denotes the null element.

A  set $C  \subseteq  \mathfrak{X}$ is convex if for any $x, y \in C$ and any $\alpha \in [0,1]$, it
 holds $\alpha x+(1-\alpha ) y \in C.$

\vspace{2mm}

 Given a set $\mathcal{B}  \subseteq  \mathfrak{X}$, we denote by:

    \vspace{-2mm}

\begin{itemize}
    \item ${\rm conv}(\mathcal{B})$ its \emph{convex hull}, the smallest convex set containing $\mathcal{B}$;

    \vspace{-3mm}
		
    \item ${\rm cone}(\mathcal{B})$ (resp. ${\rm aff}(\mathcal{B})$) its \emph{conic} (resp.
    \emph{affine}) hull, {\it i.e.}, the set of all conic (resp. affine) combinations of points in $\mathcal{B}$;

    \vspace{-3mm}

    \item  $ {\rm span}(\mathcal{B})$ its {\it span} (the smallest subspace of $\mathfrak{X}$ containing $\mathcal{B}$), the set of all finite linear combinations of elements of $\mathcal{B}$:
$$ {\rm span}(\mathcal{B})= \left\{\sum_{i=1}^{k} \lambda_i b_i\;:\; k \in \mathbb{N},\;b_i \in \mathcal{B},\;\lambda_i \in \mathbb R \right\};$$

    \vspace{-4mm}

		\item ${\rm dim}(\mathcal{B})$ its  dimension, {\it i.e.}  the dimension of its affine hull.

\end{itemize}

A set $\mathcal{K}  \subseteq  \mathfrak{X}$ is  called  a {\it cone} if for any $x \in \mathcal{K}$ and any $\alpha> 0$, it holds $\alpha x\in \mathcal{K}.$ Given a cone $\mathcal{K} \subset \mathfrak{X}$, its dual cone $\mathcal{K}^*$ is  defined as
 \begin{equation*}\mathcal{K}^*=\{u\in \mathfrak{X}: \langle x,u\rangle \geq 0  \ \forall x\in  \mathcal{K}\}.\end{equation*}

 A cone $\mathcal{K} \subset \mathfrak{X} $ is said to be \emph{pointed} if $\mathcal{K} \cap (-\mathcal{K}) = \{\textbf{0}\},$
that is, $ \mathcal{K}$ contains no nontrivial linear subspace. It is called \emph{full-dimensional} if
$ \dim(\operatorname{span} \mathcal{K}) = \dim \mathfrak{X}$. A cone $ \mathcal{K}\subset \mathfrak{X} $ is said to be {\it proper} if it is closed, convex, pointed, and full-dimensional.

Let $ \mathcal{K} \subset \mathfrak{X}$ be a  convex  cone. We recall some standard definitions related to the {\it faces} of the cone that will be used in what follows.
 \begin{itemize}
 \item  A nonempty convex subset $\mathcal{F}$ of  $\mathcal{K}$ is   called a   {\it face}  of $\mathcal{K}$   if   {the}  inclusion $\alpha x + ( 1-  \alpha )y \in  \mathcal{F}, $ with$ \ x, y \in \mathcal{K}  $ and $ \ \alpha \in (0, 1),$ implies   $x, y \in  \mathcal{K}$.
Evidently, each face of  a closed convex cone  $\mathcal{K}$ is itself a cone.

  \vspace{-3mm}

 \item  {A face}  $\mathcal{F}$ of   $\mathcal{K}$  is called  {\it proper}    if  $\mathcal{F}\not=\emptyset$ and   $\mathcal{F} \neq \mathcal{K}$.

    \vspace{-3mm}

 \item A face $\mathcal{F}$  is a {\it maximal face} of  ${\mathcal K}$  if $\mathcal{F}\not={\mathcal K}$, and there does not
exist other face $\bar{\mathcal{F}}\not={\mathcal K}$ of ${\mathcal K} $  such that $\mathcal{F}\subset \bar{\mathcal{F}}.$

    \vspace{-3mm}

\item  A face $\mathcal{F}$ of  a closed convex cone  $\mathcal{K}$  is called {\it exposed} if it can be  presented as the intersection of $\mathcal{K}$ with a  supporting hyperplane, that is,  there exist $y \in \mathfrak{X}$ and $d \in \mathbb R$
such that for all $x \in \mathcal{K}$, it holds $\langle y, x\rangle   \geq    d$ and  $\langle y, x\rangle = d $ if  and only if $x \in \mathcal{F}. $

    \vspace{-3mm}

\item For a face $\mathcal{F}$  of the cone   $\mathcal K,$ denote by ${\rm dim}(\mathcal{F})$ its dimension.

    \vspace{-3mm}

\item A cone $ \mathcal{K} $ is called {\it facially exposed} if all its faces are exposed.
\end{itemize}

  It follows from the definitions above that   a face $\mathcal{F}$ of  a closed convex cone  $\mathcal{K} \subset  \mathfrak{X}$  is  exposed if and only if there exists an element $u\in \mathcal{K}^*$  such that $\mathcal{F}=\mathcal{K}\cap u^\bot.$

Let $ \mathcal{K} \subset \mathfrak{X} $ be a cone and let $v \in \mathcal{K} \setminus \{\textbf{0}\}$.
We define the ray {\it generated by} (or given by) $v $ as $
\mathbb{R}_+ v := \{ \alpha v \mid \alpha \ge 0\}$.
 A ray $\mathbb{R}_+ v$ is  an {\it extreme ray} of the cone
$\mathcal{K}$ if, for any $ x, y \in \mathcal{K}$,
$ x + y \in \mathbb{R}_+ v \quad $ implies$ \quad x, y \in \mathbb{R}_+ v.$
 We will say that a nonzero element $ v \in \mathcal{K}$  generates an
 exposed ray  of a proper cone $\mathcal{K}$ if   the ray $R^+v$ is  an exposed face  of $\mathcal{K}$.
In this case, the ray $R^+v$ is called an {\it exposed ray} of $\mathcal{K}$.

  An element $v\in \mathcal{K}$ is called extremal/ exposed if it generates an extreme/ exposed ray of the cone $ \mathcal{K}.$

\begin{theorem}\label{T1} [\cite{Dickinson}, Theorem 2.20] If ${\mathcal K}$ is a proper cone and $x\in R^+v$, where ${R^+v}$ is an exposed ray of ${\mathcal K}^*$
generated by some $v \in \mathcal{K}^*\setminus \{\textbf{0}\}$, then ${\mathcal F}:={\mathcal K}\cap x^\bot$
is a maximal face of ${\mathcal K}$.
\end{theorem}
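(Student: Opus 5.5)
Here $x$ is a positive multiple of $v$. The plan is to show three things in turn: $\mathcal F$ is a face, it is proper, and no proper face strictly contains it.

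\emph{$\mathcal F$ is a proper face.} Since $v\in\mathcal K^*$, we have $\langle x,y\rangle\ge 0$ for all $y\in\mathcal K$. Hence $\mathcal F=\mathcal K\cap x^\perp$ is an exposed face of $\mathcal K$. It is nonempty because it contains $\textbf 0$. If $\mathcal F=\mathcal K$, then $x\perp \mathcal K$. Since $\mathcal K$ is full-dimensional, $\operatorname{span}\mathcal K=\mathfrak X$, so $x=\textbf 0$, a contradiction. Thus $\mathcal F\neq\mathcal K$.

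\emph{Maximality.} Let $\bar{\mathcal F}\neq\mathcal K$ be a face of $\mathcal K$ with $\mathcal F\subseteq\bar{\mathcal F}$; we show $\bar{\mathcal F}=\mathcal F$.
\begin{enumerate}
\item \emph{$\bar{\mathcal F}$ lies in a supporting hyperplane.} A proper face of a closed convex cone lies in the relative boundary. Since $\mathcal K$ is full-dimensional, this is the topological boundary. Take a point $z$ in the relative interior of $\bar{\mathcal F}$. Then $z\in\partial\mathcal K$, so some nonzero $u\in\mathcal K^*$ satisfies $\langle u,z\rangle=0$ (supporting hyperplane theorem). Because $z$ is relatively interior in $\bar{\mathcal F}$ and $u\ge 0$ on $\bar{\mathcal F}$, the linear functional $\langle u,\cdot\rangle$ must vanish on all of $\bar{\mathcal F}$. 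Therefore $\bar{\mathcal F}\subseteq \mathcal K\cap u^\perp$.
\item \emph{Compare the dual faces.} Define $G_u=\mathcal K^*\cap \mathcal F^\perp$. This is a face of $\mathcal K^*$.
  \begin{itemize}
  \item $u\in G_u$, because $\mathcal F\subseteq\bar{\mathcal F}\subseteq u^\perp$.
  \item $x\in G_u$, by the definition of $\mathcal F$.
  \end{itemize}
  The key claim is $G_u=\mathbb R_+v$. Write $\mathbb R_+v=\mathcal K^*\cap w^\perp$ with $w\in\mathcal K^{**}=\mathcal K$, using exposedness and $\mathcal K^{**}=\mathcal K$ for closed convex cones. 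Then $w\in\mathcal K$ and $\langle w,x\rangle=0$, so $w\in\mathcal F$. Any $y\in G_u$ is orthogonal to $\mathcal F\ni w$, hence $y\in\mathcal K^*\cap w^\perp=\mathbb R_+v$. This proves $G_u\subseteq\mathbb R_+v$. The reverse inclusion holds because $x\in G_u$ and $G_u$ is a cone, so $G_u=\mathbb R_+v$.
\item \emph{Conclude.} From $u\in G_u=\mathbb R_+v$ and $u\neq\textbf 0$, we get $u=\beta x$ for some $\beta>0$. Therefore $\bar{\mathcal F}\subseteq\mathcal K\cap u^\perp=\mathcal K\cap x^\perp=\mathcal F$, so $\bar{\mathcal F}=\mathcal F$.
\end{enumerate}

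The main obstacle is step 1: showing that an arbitrary proper face, not assumed exposed, lies inside some exposed face $\mathcal K\cap u^\perp$ with $u\neq\textbf 0$. Relative-interior points of a proper face cannot be interior points of $\mathcal K$. The reason is that if $z\in\operatorname{int}\mathcal K$, then $z$ is a proper convex combination of any point of $\mathcal K$ and a nearby point of $\mathcal K$, which would force $\bar{\mathcal F}=\mathcal K$. Once $z\in\partial\mathcal K$, the supporting hyperplane theorem applies. The exposedness of the ray $\mathbb R_+v$ is used exactly once, in the identification of $G_u$ in step 2, and that is where the hypothesis is essential.
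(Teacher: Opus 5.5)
The paper gives no proof of this statement. It is quoted from Dickinson (Theorem~2.20) and used as a black box in the proofs of the two main theorems, so there is no in-paper argument to compare with.

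Your proof is correct and complete on its own terms. It rests on two ingredients, and both are the right ones:
\begin{itemize}
\item Every proper face $\bar{\mathcal F}$ of the closed, full-dimensional cone $\mathcal K$ lies in some exposed face $\mathcal K\cap u^\perp$ with $\mathbf 0\neq u\in\mathcal K^*$. Your relative-interior argument combined with the supporting-hyperplane theorem establishes this.
\item The vector $w\in\mathcal K^{**}=\mathcal K$ that exposes the ray $R^+v$ belongs to $\mathcal F$. This squeezes $\mathcal K^*\cap\mathcal F^\perp$ down to $R^+v$ and forces $u$ to be a positive multiple of $x$.
\end{itemize}

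A few remarks. Your opening restriction that $x$ be a positive multiple of $v$ is genuinely needed. As printed, $x\in R^+v$ permits $x=\mathbf 0$, and then $\mathcal F=\mathcal K$ is not a maximal face.

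Your step~1 correctly uses the standard definition of a face, where the defining implication concludes $x,y\in\mathcal F$. The paper's text says $x,y\in\mathcal K$, which is a typo; under it every nonempty convex subset would be a face.

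Your argument never uses pointedness of $\mathcal K$: closedness, convexity and full-dimensionality suffice. You also note that $\mathcal K^*\cap\mathcal F^\perp$ is a face of $\mathcal K^*$. That is true, but your proof never uses it.
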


\subsection{The cones of copositive and completely positive matrices}

Throughout the paper, we  use the following standard notation.  {For} $n\in \mathbb{N},$ {denote} $[n]:=\{1,2, ...,n\}$.
 Let $\mathbb{R}^n$ be the $n$-dimensional Euclidean space  equipped  with  the  standard basis  $\{\mathbf{e}_i : i \in [n]\}$.
  For $n>1, $  vectors in $\mathbb R^n$  are written in  lowercase boldface   letters.    Unless stated otherwise,
	a vector \( \textbf{u} \in \mathbb{R}^n \),   is understood to be  a column vector.

Given $\textbf{u}\in \mathbb R^n,$ the support of $\textbf{u}$ is the index set ${\rm supp}(\textbf{u}):=\{i\in {[n]}:
\ u_i\neq 0\}$, where $u_i$ is the $i$-th component of vector $\textbf{u}$.

Denote by $\mathbb{R}^n_+$ the nonnegative orthant and by $\mathbb{R}^n_{++}$ the strictly positive orthant in $\mathbb{R}^n$.
 For any vector $\textbf{u}\in \mathbb{R}^n_+$, we  write
 $\textbf{u}{\geq} \textbf{0}$ and for any $\textbf{u}\in \mathbb{R}^n_{++}$, we   write  $\textbf{u} > \textbf{0}$.

 Let $\mathbb{S}(n)$ denote the space of symmetric $n \times n$ matrices, and let $\mathbb{S}_+(n)$ denote the cone of symmetric positive semidefinite matrices.
Denote by $\mathbb{COP}(n)$ the cone of copositive $n \times n$ matrices
 $$\cop:=\{X\in \mathbb{S}_+(n): \mathbf{t}^\top X\mathbf{t}\geq 0\ \forall \mathbf{t} \in \mathbb R^n_+\}$$
 and by $\mathbb{CP}(n)$ its dual, the cone of completely positive  matrices,
 $$\mathbb{CP}(n):= {\rm cone}\{\mathbf{x}\mathbf{x}^{\top}, \mathbf{x}\in \mathbb{R}^n_+\}.$$
It is easy to see that the cone $\cop$ can be equivalently defined as
\begin{equation} \cop:=\{X\in \mathbb{S}_+(n): \mathbf{t}^\top X\mathbf{t}\geq 0\ \forall \mathbf{t} \in T \},\label{cop}\end{equation}
where $T:=\{ \mathbf{t} \in  \mathbb R^n_+: ||\mathbf{t}||_1=0\}$.

 Given  an $n\times m$  matrix $A$,  denote by  $A_{ij},$ $i\in [n],$ $j\in [m],$ its  entries.

Let $A\in {\mathbb{ S}}(n)$. The  {\it symmetric vectorization} of $A$, denoted by
$\operatorname{svec}(A) \in \mathbb{R}^{n(n+1)/2}$,
is defined by stacking the upper-triangular entries of $A$, including the diagonal, with off-diagonal entries scaled by $\sqrt{2}$:
$$
\operatorname{svec}(A)
=
\bigl(
A_{11},\,
\sqrt{2}A_{12},\, \ldots, \, \sqrt{2}A_{1n},\,
A_{22},\, \sqrt{2}A_{23}, \, \ldots, \, A_{nn}
\bigr)^\top.
$$
This scaling ensures that the inner product is preserved:
\[
\langle A, B \rangle := \operatorname{trace}(AB) =
\langle \operatorname{svec}(A), \operatorname{svec}(B) \rangle,
\quad \text{for all } A, B \in \mathbb{S}^n.
\]

For a finite family of symmetric $n\times n $ matrices $A(i),$ $i \in {\cal I},$ we define
$ {\rm rank }(A(i),\, i \in {\cal  I}):=$ ${\rm rank }({\rm svec}(A(i)),\, i \in {\cal I}).$

\vspace{2mm}

Denote by ${\mathbb{F}}_{max}(n)$ the set of all maximal faces of the cone $\cp.$
In what follows, we denote by $low(n)$ the  {\it  tight  lower bound} on the dimensions of the faces over all  maximal faces of $\cp$, that is
$$low(n):=\min\limits_{\mathcal{F}\in {\mathbb{F}}_{min}(n)}{\rm dim}(\mathcal{F}).$$

\begin{definition} Let  $ A\in \cop$.
\begin{itemize} \item A nonzero vector $\btau \in {\mathbb R}^n_+$
 is called a {\it zero} of $A $ if $\btau^\top A\btau = 0.$

 \vspace{-2mm}

 \item A zero $\btau$  of $ A $ is called {\it  minimal} if there does not exist another zero $ \bt$ of $A $ such that ${\rm supp}(\bt)$  is
a  proper  subset of ${\rm supp }(\btau)$.\end{itemize}
\end{definition}

In what follows, a zero $\btau$   will be called {\it normalized} if $||\btau||_1=1.$  Due to (\ref{cop}), in what follows,
  without loss of generality,  we will consider only normalized zeros of copositive matrices.

  Given $A\in \cop$, we  denote  by $Z(A)$ the set of all its normalized zeros:
$$Z(A):=\{\btau \in {\mathbb{R}}^n_+:\, \btau^\top A\btau=0,\ ||\btau||_1=1\}.$$
Suppose that $Z(A)\not=\emptyset$  and let  $ \btau^j, j\in [p], $  be the {\it normalized minimal zeros} of the matrix $A$, indexed from $1$ to $p$.
 Denote{:}
$${Z}_{min}(A):=\{\btau^j, j \in J\}  \mbox{ with } J:=[p],$$
and construct the set of index pairs
\begin{equation} E=\{(i,j): \ i \in J, \ j \in J,\ i< j,\ (\btau^i)^\top {A}\btau^j=0\}.\label{N2-1}\end{equation}
  {We then define} the corresponding  undirected  graph $G={G(A):=}(J, E)$  with   {vertex set}  $J$   and  {edge} set   $E$.
	{This graph was introduced in  \cite{KT25}  as  the {\it minimal zeros graph} of the matrix $A$,  and the }
	following lemma was proved  there.

\begin{lemma} \label{R-L-1} For $A\in \cop$ with a nonempty set of normalized {minimal} zeros
{${Z}_{min}(A):=\{\btau^j, j \in J\}$}, let $\{J(s),\ s \in S\}$   denote the set of all maximal ({distinct})
 cliques of the   associated  minimal zeros graph  $G$. Then the  set
 {of all normalized zeros}  $ \  Z(A)$  admits the {following} representation:
\begin{equation} Z(A)=\bigcup\limits_{s \in S}{\cal Z}(s,A), \mbox{ where } {\cal Z}(s,A):={\rm conv} \{{\btau}^j, j \in J(s)\}
\ \forall s \in S.\label{part}\end{equation}
\end{lemma}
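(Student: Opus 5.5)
The plan is to prove the two inclusions in (\ref{part}) separately. The key tool is the standard fact for copositive matrices: if $\btau \in Z(A)$, then $A\btau \ge \textbf{0}$ and $(A\btau)_i=0$ for all $i\in{\rm supp}(\btau)$. This follows from first-order optimality of the quadratic form $\bt^\top A\bt\ge 0$ on $\mathbb R^n_+$ at the minimizer $\btau$.

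For the inclusion $\supseteq$, fix a maximal clique $J(s)$ and a convex combination $\bt=\sum_{j\in J(s)}\lambda_j\btau^j$. Then
\[
\bt^\top A\bt=\sum_{i,j\in J(s)}\lambda_i\lambda_j(\btau^i)^\top A\btau^j .
\]
The diagonal terms vanish because each $\btau^j$ is a zero. The off-diagonal terms vanish by the definition (\ref{N2-1}) of the edge set $E$, since $J(s)$ is a clique. Moreover $\bt\ge\textbf{0}$ and $\|\bt\|_1=\sum_j\lambda_j=1$, so $\bt\in Z(A)$.

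For the inclusion $\subseteq$, take $\bt\in Z(A)$.

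\textbf{Step 1.} Show that $\bt$ is a convex combination of minimal zeros whose supports lie in ${\rm supp}(\bt)$. I would argue by induction on $|{\rm supp}(\bt)|$. If $\bt$ is minimal, it is one of the $\btau^j$. Otherwise there is a minimal zero $\btau^j$ with ${\rm supp}(\btau^j)\subsetneq{\rm supp}(\bt)$. Set $\bt'=\bt-\varepsilon\btau^j$, with $\varepsilon>0$ the largest value keeping $\bt'\ge\textbf{0}$; then ${\rm supp}(\bt')$ is strictly smaller than ${\rm supp}(\bt)$. To check that $\bt'$ is still a zero, note that $A\bt$ vanishes on ${\rm supp}(\bt)\supseteq{\rm supp}(\btau^j)$, so $(\btau^j)^\top A\bt=0$. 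Hence
\[
\bt'^\top A\bt'=\bt^\top A\bt-2\varepsilon(\btau^j)^\top A\bt+\varepsilon^2(\btau^j)^\top A\btau^j=0 .
\]
After normalizing, the induction hypothesis applies to $\bt'$ (when $\bt'\neq\textbf{0}$), and rescaling gives the convex combination for $\bt$. I expect this step to be the main obstacle. It must be checked that $\bt'$ is nonzero or else already proportional to $\btau^j$, and that the coefficients stay nonnegative and sum to one after the $\ell_1$-normalization. Nonnegativity of all vectors makes $\|\cdot\|_1$ linear here, which handles the sum.

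\textbf{Step 2.} Show that the minimal zeros appearing in the combination are pairwise adjacent in $G$. For any two of them, $\btau^i$ and $\btau^j$, both supports lie in ${\rm supp}(\bt)$, where $A\bt$ vanishes. Hence $(\btau^i)^\top A\bt=0$. Expanding $\bt=\sum_k\lambda_k\btau^k$ gives
\[
\sum_k\lambda_k(\btau^i)^\top A\btau^k=0 .
\]
Every term in this sum is nonnegative, because $\btau^i,\btau^k\ge 0$ and $A\btau^k\ge\textbf{0}$ by the first-order fact applied to the zero $\btau^k$. Since all $\lambda_k>0$, every term vanishes, so $(\btau^i)^\top A\btau^k=0$ for all pairs. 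The index set of the combination is therefore a clique, it lies in some maximal clique $J(s)$, and so $\bt\in{\cal Z}(s,A)$.
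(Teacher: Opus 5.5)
Your proof is correct. The paper does not prove this lemma itself; it quotes it from \cite{KT25}, so there is no in-paper argument to compare against. Your proposal gives a complete, self-contained proof built on the first-order conditions $A\btau\ge\textbf{0}$ and $(A\btau)_k=0$ for $k\in{\rm supp}(\btau)$.

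The two points you flag in Step~1 close immediately. First, a non-minimal zero $\bt$ has a minimal zero strictly below it: among the zeros whose support lies in the smaller support, take one of least support cardinality. Second, $\bt'=\bt-\varepsilon\btau^j$ is automatically nonzero. Since ${\rm supp}(\btau^j)\subsetneq{\rm supp}(\bt)$, you have $t'_k=t_k>0$ for every $k\in{\rm supp}(\bt)\setminus{\rm supp}(\btau^j)$.

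In Step~2 you could skip the support information and argue directly from $0=\bt^\top A\bt=\sum_{i,k}\lambda_i\lambda_k(\btau^i)^\top A\btau^k$, whose terms are all nonnegative. Finally, it is the finiteness of $J$ that guarantees the clique you obtain extends to a maximal clique $J(s)$.
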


 {Note that}  representation (\ref {part})  was {used} in \cite{INFOR} to show  that
\bea&\qquad \qquad {\rm rank}\bigl(\btau \btau^{\top}, \ \btau \in Z(A)\bigl)\nonumber\\
&={\rm rank}\bigl(({\btau}^i+{\btau}^j)({\btau}^i+{\btau}^j)^\top,\ (i,j)\in V(s),\,s \in S\bigl),\nonumber\eea%\end{split}\end{equation}*$$%\label{rank}\ee
where $V(s):=\{(i,j): \ i\in J(s),\, j\in J(s),\, i\leq j\},$ $ s\in S.$ Based on this observation, we obtain the following corollary.

\begin{corollary}\label{OO_L} Consider a matrix $A\in \cop$, the corresponding set of its normalized minimal zeros  $Z_{min}(A)$, the {minimal zeros} graph  $G$, and the set $\{J(s), s \in S\}$ of its maximal cliques.  Then the dimension of the face $\mathcal F:=\cp\cap A^\bot$ of the cone $\cp$ can be determined as follows{:}
$${\rm dim}(\mathcal F)={\rm rank}(({\btau}^i+{\btau}^j)({\btau}^i
+{\btau}^j)^\top,\ (i,j)\in V(s),\,s \in S).$$\end{corollary}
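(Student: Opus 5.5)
The face $\mathcal F:=\cp\cap A^\bot$ has a simple description, and the plan is to combine that description with Lemma~\ref{R-L-1} and the rank identity from \cite{INFOR}.

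\emph{Step 1: describe $\mathcal F$ as a cone generated by zeros.} Every element of $\cp$ is a finite sum $X=\sum_k \mathbf{x}_k\mathbf{x}_k^\top$ with $\mathbf{x}_k\ge \mathbf 0$. Then
\[
\langle A,X\rangle=\sum_k \mathbf{x}_k^\top A\mathbf{x}_k .
\]
Since $A$ is copositive, each summand is nonnegative. Hence $\langle A,X\rangle=0$ holds exactly when every nonzero $\mathbf{x}_k$ is a positive multiple of some $\btau\in Z(A)$. This gives
\[
\mathcal F={\rm cone}\{\btau\btau^\top:\ \btau\in Z(A)\}.
\]

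\emph{Step 2: pass from dimension to rank.} Because $\mathcal F$ is a cone containing $\textbf 0$, its affine hull coincides with its linear span. So ${\rm dim}(\mathcal F)$ equals the dimension of ${\rm span}\{\btau\btau^\top:\ \btau\in Z(A)\}$. Applying ${\rm svec}$, which is a linear isometry, this number is exactly ${\rm rank}\bigl(\btau\btau^\top,\ \btau\in Z(A)\bigr)$ in the notation introduced above. (If $Z(A)=\emptyset$, then $\mathcal F=\{\textbf 0\}$ and both sides are $0$, so this case is trivial; the corollary assumes $Z_{min}(A)\neq\emptyset$ anyway.)

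\emph{Step 3: reduce to finitely many generators.} Apply the rank identity from \cite{INFOR}, which rests on representation (\ref{part}) of Lemma~\ref{R-L-1}. It replaces the rank over $Z(A)$ by the rank of the finite family $(\btau^i+\btau^j)(\btau^i+\btau^j)^\top$, $(i,j)\in V(s)$, $s\in S$. That is precisely the claimed formula.

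If I wanted to verify this identity directly, I would argue as follows. For $\btau=\sum_{j\in J(s)}\lambda_j\btau^j$,
\[
\btau\btau^\top=\sum_{i,j}\lambda_i\lambda_j\btau^i(\btau^j)^\top .
\]
Each symmetrized term $\btau^i(\btau^j)^\top+\btau^j(\btau^i)^\top$ is a linear combination of the $(\btau^i+\btau^j)(\btau^i+\btau^j)^\top$ together with the diagonal terms $\btau^i(\btau^i)^\top=\tfrac14(2\btau^i)(2\btau^i)^\top$, which correspond to the pairs $(i,i)\in V(s)$. Conversely, each $\tfrac12(\btau^i+\btau^j)$ with $i,j$ in a common clique $J(s)$ lies in $\mathcal Z(s,A)\subseteq Z(A)$, so each generator of the finite family is a multiple of some $\btau\btau^\top$ with $\btau\in Z(A)$. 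The two spans therefore coincide.

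The only point needing any care is the identification in Step 1, namely that the vanishing of $\langle A,X\rangle$ forces every summand $\mathbf{x}_k^\top A\mathbf{x}_k$ to vanish. This follows immediately from copositivity, so I expect no real obstacle.
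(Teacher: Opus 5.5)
Your proposal is correct and takes the same route as the paper. The paper deduces the corollary from the rank identity of \cite{INFOR}, which rests on representation (\ref{part}), and leaves implicit the facts you make explicit in Steps 1--2: that $\mathcal F={\rm cone}\{\btau\btau^\top:\ \btau\in Z(A)\}$ and that ${\rm dim}(\mathcal F)$ is the rank of these generators. Your direct check of the identity is also sound, including the role of the diagonal pairs $(i,i)\in V(s)$ and the inclusion $\mathcal Z(s,A)\subseteq Z(A)$ for the reverse direction.
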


In \cite{local},  given  a copositive matrix $A$ and  one of   its normalized  zeros $\btau$,
 the  set  ${\cal J}(\btau, A)$ was  defined as  the union of ${\rm supp} (\bt)$ over all $\bf t \in Z(A)$   satisfying ${\rm supp}(\btau) \subset {\rm supp}(\bt).$

Representation (\ref{part}) of the set of zeros of $A$  enables  us to  derive  an explicit
  description  of the  set ${\cal J}(\btau, A)$. To  this  end,   we introduce   several  sets and vectors  associated with the  given $A\in \cop$ and $\btau\in Z(A)$, using the corresponding   set of normalized   minimal zeros  $ {\{}{\btau}^j,j \in J {\}},$  and the  family of index  set $\{J(s), s \in S\}$ defined above.  Consider the sets
$$P_*(s):=\bigcup\limits_{j \in J(s)}{\rm supp}({\btau}^j) \  \forall s \in S \mbox{ and } S(\btau):=\{s \in S:\btau\in {\mathcal{Z}}(s,A)\},   $$
and denote $\bt(s):=\sum\limits_{j \in J(s)}{\btau}^j \ \forall s \in S.$

By construction,
\begin{equation} {\rm supp}(\bt(s))=P_*(s) \ \ \forall s \in S,\label{t-s}\end{equation}
and it follows from    Proposition 2  in \cite{KT25} that
\begin{equation} S(\btau)=\{s\in S: {\rm supp}(\btau) \subset P_*(s)\}.\label{Stau}\end{equation}

\begin{proposition}\label{P_02_02}   Assume above notations. Let $A\in \cop$ and let $\btau$ be a normalized zero of  $A$.
  Then the set ${\cal J}(\btau, A)$    admits a representation
\begin{equation} {\cal J}(\btau, A)=\bigcup\limits_{s \in S(\btau)}P_*(s)=\bigcup\limits_{s \in S(\btau)}{\rm supp}({\bf t}(s)).\label{30-1}\end{equation}
\end{proposition}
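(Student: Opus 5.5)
We prove the two inclusions in (\ref{30-1}) separately. The second equality in (\ref{30-1}) is immediate from (\ref{t-s}), so only the first one needs an argument. The main tools are the clique representation (\ref{part}) of Lemma~\ref{R-L-1} and the characterization (\ref{Stau}) of $S(\btau)$.

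\emph{Inclusion ``$\supseteq$''.} Fix $s\in S(\btau)$. The normalized vector $\bt:=\bt(s)/\|\bt(s)\|_1$ is a convex combination of $\{\btau^j, j\in J(s)\}$, so $\bt\in{\cal Z}(s,A)\subset Z(A)$ by (\ref{part}). By (\ref{Stau}) we have ${\rm supp}(\btau)\subset P_*(s)$, and by (\ref{t-s}) we have $P_*(s)={\rm supp}(\bt)$. Hence $\bt$ is one of the zeros taken in the definition of ${\cal J}(\btau,A)$, and therefore $P_*(s)\subset{\cal J}(\btau,A)$.

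\emph{Inclusion ``$\subseteq$''.} Take any $\bt\in Z(A)$ with ${\rm supp}(\btau)\subset{\rm supp}(\bt)$. By (\ref{part}) there is $s\in S$ with $\bt\in{\cal Z}(s,A)$. Write $\bt=\sum_{j\in J(s)}\lambda_j\btau^j$ with $\lambda_j\ge 0$. All $\btau^j$ are nonnegative, so there is no cancellation and
\[
{\rm supp}(\bt)=\bigcup_{j:\lambda_j>0}{\rm supp}(\btau^j)\subset P_*(s).
\]
Consequently ${\rm supp}(\btau)\subset{\rm supp}(\bt)\subset P_*(s)$, and (\ref{Stau}) gives $s\in S(\btau)$. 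Thus ${\rm supp}(\bt)$ lies in the right-hand side of (\ref{30-1}). Taking the union over all admissible $\bt$ yields ${\cal J}(\btau,A)\subset\bigcup_{s\in S(\btau)}P_*(s)$.

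The only delicate point is the interpretation of ``$\subset$'' in the definition of ${\cal J}(\btau,A)$: it must mean non-strict inclusion, as in (\ref{Stau}). Under that reading $\btau$ itself is admissible, and both directions are consistent. The substantive input is (\ref{Stau}), taken from Proposition~2 of \cite{KT25}; once it is assumed, the argument consists of the two support computations above, which use only nonnegativity of the minimal zeros.
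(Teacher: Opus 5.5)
Your proof is correct and follows the paper's argument: the same two inclusions, with the representation (\ref{part}) and the characterization (\ref{Stau}) doing the work. The differences are minor. You argue ``$\subseteq$'' directly rather than by contradiction, derive ${\rm supp}(\bt)\subset P_*(s)$ from nonnegativity of the minimal zeros instead of citing Corollary~4 of \cite{KT25}, and explicitly check that $\bt(s)/\|\bt(s)\|_1\in Z(A)$, a step the paper leaves implicit.
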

{\bf Proof.}  Consider a matrix $A\in \cop$ and its zero $\btau\in Z(A).$
It is evident that, due to (\ref{t-s}),  the inclusion ${\rm supp}(\btau)\subset P_*(s)$ implies  ${\rm supp}(\btau)\subset{\rm supp}(\bt(s))$, and consequently,  it holds
$\bigcup\limits_{s \in S(\btau)}{\rm supp}(\bt(s))\subset {\cal J}(\btau, A).$

Suppose  now that there exists $k_0\in {\cal J}(\btau, A)$ such that $k_0\not\in \bigcup\limits_{s \in S(\btau)}{\rm supp}(\bt(s)).$
Since $k_0\in {\cal J}(\btau, A)$, it follows from the definition of the set ${\cal J}(\btau, A)$ that there exists $\bar \bt \in Z(A)$ such that
$$k_0\in {\rm supp}(\bar \bt),\ \ {\rm supp}(\btau)\subset {\rm supp}(\bar \bt).$$  Since $\bar \bt \in Z(A)$,   relation  (\ref{part})
  implies  that there exists $s_0\in S$ such that $\bar \bt\in Z(s_0,A)$. It  then  follows from  Corollary 4 in \cite{KT25} that
$$ {\rm supp}(\bar \bt)\subset P_*(s_0)= {\rm supp}(\bt(s_0)),$$
 and hence
$${\rm supp}(\btau)\subset {\rm supp}(\bar \bt)\subset P_*(s_0).$$
Then, by the definition of the set $S(\btau)$, we   conclude that   $s_0\in S(\btau).$ Consequently,
$${\rm supp}(\bar \bt)\subset \bigcup\limits_{s \in S(\btau)}{\rm supp}(\bt(s)),  $$
 and therefore
$$k_0\in {\rm supp}(\bar \bt)\subset \bigcup\limits_{s \in S(\btau)}{\rm supp}(\bt(s)).$$
But this contradicts the assumption that $k_0\not\in \bigcup\limits_{s \in S(\btau)}{\rm supp}(\bt(s)).$
Thus, we have shown that ${\cal J}(\btau, A)\subset \bigcup\limits_{s \in S(\btau)}{\rm supp}(\bt(s)).$
 Combining this with the  inclusion
$\bigcup\limits_{s \in S(\btau)}{\rm supp}(\bt(s))\subset {\cal J}(\btau, A)$ proved above, we conclude  that the representation
(\ref{30-1})  holds. $\ \Box$

\vspace{4mm}

 Note that for a minimal zero ${\btau}^j$, $j\in J,$ the corresponding set ${\cal J}({\btau}^j, A)$ takes the form
\begin{equation}{\cal J}({\btau}^j, A)=\bigcup\limits_{s\in \{k\in S:j\in J(k)\}} P_*(s).
\label{calJ}\end{equation}

In  \cite{KT-Mathematics}, for  each  $j \in J, $ the set $\bigcup\limits_{s\in \{k\in S:j\in J(k)\}} P_*(s)$ was denoted
 by  $M_*(j)$, and the set $[n]\setminus {\rm supp}(A{\btau}^j)$  was denoted by  $M(j)$:
\begin{equation}\label{Mj} M(j):=[n]\setminus {\rm supp}(A{\btau}^j).\end{equation}

Note that the statements of Corollary \ref{OO_L} and Proposition \ref{P_02_02}  are derived from representation (\ref{part}).
%which once again  highlights its usefulness.

\vspace{3mm}

  It is natural to conjecture  that the    the minimum dimension
of a maximal face of the cone of completely positive $n\times n$ matrices   equals $n(n-1)/2$,
 since this holds for the  cone  $\cop$  and for  the  cone ${\mathbb{CP}}(n)$ with $n\leq 4.$
    Dickinson    refuted this   in  \cite{Dickinson}  by
 exhibiting a maximal face of   $\mathbb{COP}(9)$  of  dimension   $27$.
 Very  recently,   Holmgren and Zhang   (\cite{Zhang})  obtained  the following results.

\begin{lemma}\label{L2} [\cite{Zhang}, Lemma 3.1] For $n>2,$ the dimension of a maximal face of the  cone  $\cp$ is greater than or equal to $n.$
\end{lemma}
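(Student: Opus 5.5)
We need to show that every maximal face $\mathcal F$ of $\cp$ with $n>2$ satisfies ${\rm dim}(\mathcal F)\ge n$. The idea is to write $\mathcal F=\cp\cap A^\bot$ for a suitable nonzero copositive $A$, and then produce $n$ linearly independent matrices of the form $\bt\bt^\top$ with $\bt\in Z(A)$.

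\textbf{Step 1: reduce to an exposed ray of $\cop$.} Since $\mathcal F$ is a proper face of $\cp$, it lies in some exposed proper face $\cp\cap A^\bot$ with $A\in\cop\setminus\{0\}$; by maximality these coincide. It is convenient to take $A$ generating an exposed (in particular extreme) ray of $\cop$, via Theorem~\ref{T1}. If that choice is not available, it suffices that $A$ be extremal: we may replace $A$ by any nonzero copositive matrix whose face contains $\mathcal F$, and by maximality the face does not change.

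\textbf{Step 2: identify $\mathcal F$ with the span of the zeros.} We have $\mathcal F={\rm cone}\{\bt\bt^\top:\ \bt\in Z(A)\}$, so ${\rm dim}(\mathcal F)={\rm rank}(\bt\bt^\top,\ \bt\in Z(A))$, as in Corollary~\ref{OO_L}.

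\textbf{Step 3: the zeros cover every index.} Suppose some index $k$ lies in no support ${\rm supp}(\bt)$ with $\bt\in Z(A)$. Replace $A$ by $A+\varepsilon\mathbf e_k\mathbf e_k^\top$ for small $\varepsilon>0$. This matrix is still copositive and has exactly the same zeros, so it defines the same face. But then $A$ is a sum of two non-proportional copositive matrices, which contradicts extremality. (If $A$ itself were a multiple of $\mathbf e_k\mathbf e_k^\top$, the face would be $\{X\in\cp: X_{kk}=0\}$, whose dimension is $(n-1)n/2\ge n$ for $n>2$.) Hence $\bigcup_{\bt\in Z(A)}{\rm supp}(\bt)=[n]$.

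\textbf{Step 4: lower bound on the rank (main obstacle).} Covering $[n]$ does not by itself give $n$ independent rank-one matrices; for example, a single positive zero gives rank $1$. So extremality of $A$ must be used more sharply. The plan is a perturbation argument.

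Suppose $r:={\rm rank}(\bt\bt^\top,\ \bt\in Z(A))<n$. Choose a symmetric $B\notin{\rm span}(A)$ with $\langle B,\bt\bt^\top\rangle=0$ for all zeros $\bt$, and in addition with $B\bt=0$ wherever needed. Such a $B$ should exist because the constraints are too few compared with $n(n+1)/2$ unknowns. Then $A\pm\varepsilon B$ should remain copositive for small $\varepsilon$, which contradicts extremality.

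The delicate part is copositivity near the zero set. To handle it I would use the first-order conditions $(A\bt)_i=0$ for $i\in{\rm supp}(\bt)$ and $A\bt\ge0$, together with the finite clique structure from Lemma~\ref{R-L-1}. The aim is to require $B\bt^j=0$ for the minimal zeros (at least on the sets $M(j)$ from (\ref{Mj})) so that the local quadratic behaviour of $A$ dominates. Counting the resulting linear constraints against $n(n+1)/2$ is what yields the threshold $n$.

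If this perturbation route proves too cumbersome, the fallback is to take a single zero $\bt$ with maximal support and use copositivity to show that $A$ restricted to ${\rm supp}(\bt)$ is positive semidefinite. One then builds nearby zeros in its kernel face, and combines zeros from different cliques to reach $n$ independent rank-one matrices.
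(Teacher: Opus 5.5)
The paper does not prove this lemma; it is quoted from Holmgren and Zhang. Your proposal therefore has to stand on its own, and it does not. Steps~1 and~2 are fine (Step~1 via your fallback, since Theorem~\ref{T1} goes in the opposite direction). Step~4, which carries the whole content, is a plan rather than an argument:
\begin{itemize}
\item You never specify $B$.
\item The claim that the constraints are ``too few'' is unsupported. The natural equations $(B\btau^j)_i=0$, $i\in M(j)$, number $\sum_j|M(j)|$, and this has no evident relation to $r$.
\item Showing $A-\varepsilon B\in\mathbb{COP}(n)$ near the zero set is exactly the delicate point you leave open.
\end{itemize}
Step~3 is also wrong as written. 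That $A+\varepsilon\mathbf{e}_k\mathbf{e}_k^\top\in\mathbb{COP}(n)$ does not exhibit $A$ as a sum of two copositive matrices. For that you would need $A-\varepsilon\mathbf{e}_k\mathbf{e}_k^\top\in\mathbb{COP}(n)$, which is a nontrivial irreducibility statement. In any case, as you note, covering $[n]$ would not suffice.

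The missing idea is to use maximality of $\mathcal F$ against a positive semidefinite matrix, instead of trying to contradict extremality of $A$. The right quantity is the linear span of $Z(A)$, not the union of supports.

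First, if $\bt^1,\dots,\bt^n\in Z(A)$ are linearly independent, then so are the $\bt^i(\bt^i)^\top$. Indeed, $\sum_i c_i\bt^i(\bt^i)^\top=TDT^\top$ with $T$ invertible and $D={\rm diag}(c)$, so this sum vanishes only if $c=0$. By your Step~2, ${\rm dim}(\mathcal F)\ge{\rm dim}\,{\rm span}\,Z(A)$, and you are done whenever $Z(A)$ spans $\mathbb R^n$.

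Otherwise take $\mathbf w\neq\mathbf 0$ orthogonal to $Z(A)$. Then $\mathbf w\mathbf w^\top\in\mathbb S_+(n)\subset\mathbb{COP}(n)$, and every $X\in\mathcal F$ satisfies $\langle\mathbf w\mathbf w^\top,X\rangle=0$. The set $\mathcal G:=\mathbb{CP}(n)\cap(\mathbf w\mathbf w^\top)^\bot$ is a proper face, since it misses the identity, which lies in $\mathbb{CP}(n)$. Maximality gives $\mathcal F=\mathcal G$. There are two cases.
\begin{itemize}
\item If $\mathbf w$ has entries of both signs, the cone $\{\bt\ge\mathbf 0:\mathbf w^\top\bt=0\}$ spans $\mathbf w^\bot$. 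It contains $\mathbf e_k$ for $w_k=0$ and $|w_j|\mathbf e_i+w_i\mathbf e_j$ for $w_i>0>w_j$. Being a convex cone, it also contains sums of these, so ${\rm span}\,\mathcal G$ is the space of symmetric matrices with range in $\mathbf w^\bot$. Hence ${\rm dim}(\mathcal F)=n(n-1)/2$.
\item If, say, $\mathbf w\ge\mathbf 0$, every zero vanishes on ${\rm supp}(\mathbf w)$. So $\mathcal F\subseteq\mathbb{CP}(n)\cap(\mathbf e_k\mathbf e_k^\top)^\bot$ for any $k\in{\rm supp}(\mathbf w)$, and by maximality again ${\rm dim}(\mathcal F)=n(n-1)/2$.
\end{itemize}
Since $n(n-1)/2\ge n$ exactly when $n\ge3$, this finishes the proof. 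Extremality of $A$, Theorem~\ref{T1} and the clique structure are not needed, and your Step~3 is just the special case $\mathbf w=\mathbf e_k$.
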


\begin{theorem}\label{T4} [\cite{Zhang}, Theorems 4.6 and 4.9] The tight
 lower bound on the dimensions of maximal faces of the  cone   ${\mathbb{CP}}(5)$ is equal to 5.
The  tight   lower bound $low(n) $ on the dimensions of maximal faces of the  cone  $\cp$ is  situated
 between $estim_*(n):=n$ and $estim^*(n):=(n^2 -5n+8)/2$ for $n\geq 6.$
 \end{theorem}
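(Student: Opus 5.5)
The plan is to prove the two parts separately. The lower bound $low(n)\ge n$ is exactly Lemma \ref{L2}, so the work is in the upper estimates: $low(5)\le 5$, and $low(n)\le (n^2-5n+8)/2$ for $n\ge 6$. Both will be obtained the same way.

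\textbf{The general mechanism.}
\begin{enumerate}
\item Exhibit a matrix $A$ generating an exposed ray of $\cop$.
\item Apply Theorem \ref{T1} (with $\mathcal K=\cp$, $\mathcal K^*=\cop$) to conclude that $\mathcal F=\cp\cap A^\bot$ is a maximal face.
\item Compute $\dim(\mathcal F)$ via Corollary \ref{OO_L}, from the minimal zeros $Z_{min}(A)$, the minimal zeros graph $G(A)$ and its maximal cliques.
\end{enumerate}
The aim is to choose $A$ with few minimal zeros and a sparse graph $G(A)$, since these keep the rank in Corollary \ref{OO_L} small.

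\textbf{The case $n=5$.} I would take a Hildebrand matrix $T(\theta)$ with generic angles (not the Horn matrix, whose face is larger). Such a matrix is extremal in $\mathbb{COP}(5)$ and has exactly five normalized minimal zeros $\btau^1,\dots,\btau^5$, with ${\rm supp}(\btau^i)=\{i,i+1,i+2\}$ taken cyclically. The key check is that $(\btau^i)^\top T\btau^j\ne 0$ for all $i\ne j$. I would verify this from the explicit form of $T(\theta)$ and the known vectors $T\btau^i$, whose supports are the complements of three consecutive indices.

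Granting this, $G$ has no edges, so the maximal cliques are singletons and each $V(s)$ consists of one pair $(j,j)$. Corollary \ref{OO_L} then gives
\[\dim\mathcal F={\rm rank}\bigl(\btau^j(\btau^j)^\top,\ j\in[5]\bigr)\le 5.\]
Combined with Lemma \ref{L2}, this yields $\dim\mathcal F=5$. Exposedness of the ray of $T(\theta)$ must also be justified. I would argue that the zeros $\btau^j$ together with the linear conditions $(T\btau^j)_k=0$ for $k\in{\rm supp}(\btau^j)$ determine $T$ up to scaling. Any $B\in\cop$ with the same zeros must satisfy these conditions, which gives exposedness.

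\textbf{The case $n\ge 6$.} I would extend $T(\theta)$ to an $n\times n$ matrix $A$. One option is a block in which the new indices $6,\dots,n$ are attached through entries chosen so that the new zeros are few; for instance, each new index $k$ is paired with one old index, with $A_{kk}=1$ and suitable off-diagonal entries. I would then:
\begin{enumerate}
\item verify copositivity;
\item list $Z_{min}(A)$: the old five zeros plus the new ones created by the attachments;
\item determine the cliques of $G(A)$;
\item count the rank in Corollary \ref{OO_L}.
\end{enumerate}
The target count is $(n^2-5n+8)/2=5+(n-5)(n)/2-\dots$. I expect this to arise as $5$ plus the $\binom{n-5}{2}$-type contributions from cliques formed by the added zeros, plus a linear term in $n-5$. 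I would tune the construction so the clique structure produces precisely this number.

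\textbf{Main obstacle.} I expect the hardest step to be proving that the extended $A$ still spans an \emph{exposed} ray of $\cop$, i.e.\ that it is uniquely determined, up to scale, by its zero set within $\cop$. I would first try the same linear-algebra argument as for $n=5$: show that the system $(A\btau)_k=0$ for $k\in{\rm supp}(\btau)$, $\btau\in Z_{min}(A)$, has a one-dimensional solution space in $\mathbb S(n)$. If this fails, I would instead build $A$ by a known exposedness-preserving extension of exposed rays from $\mathbb{COP}(5)$ to $\cop$.
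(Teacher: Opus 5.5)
Your $n=5$ argument is sound. The paper itself gives no proof of this statement; it quotes it from Holmgren--Zhang. Your $n=5$ route is essentially the paper's proof of Theorem~\ref{T5} specialised to $n=5$:
\begin{itemize}
\item The matrix (\ref{AA}) with $n=5$ is $2T(\theta)$ with all angles equal to $\pi/6$, so generic angles are not needed.
\item ${\rm supp}(T\btau^j)$ is exactly the complementary pair of indices. Hence $(\btau^i)^\top T\btau^j>0$ for $i\neq j$, $G$ has no edges, and $\dim\mathcal F\le 5$.
\item Your exposedness argument works because ${\rm supp}(\btau^j)=[5]\setminus{\rm supp}(T\btau^j)$ makes your linear system coincide with the extremality system. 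That system has a one-dimensional solution space because $T$ is extremal. This is the criterion the paper invokes.
\end{itemize}

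The case $n\ge 6$, however, is not proved. You specify no matrix, and you establish neither copositivity, nor exposedness, nor the face dimension. ``Tune the construction so the clique structure produces precisely this number'' is a hope, not an argument. You also only need one exposed ray whose face has dimension \emph{at most} $(n^2-5n+8)/2$, not exactly that.

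More seriously, the extension you sketch cannot reach the bound at $n=6$. Attaching a new index as a copy of an old one is the case $|I|=1$ of $B(A,I)$. For $A=T$ this is exactly the paper's construction in Theorem~\ref{Teven} with $\bar n=6$. It yields a maximal face of dimension $|J|+|J_0|+1=5+3+1=9$, whereas the claim requires at most $(36-30+8)/2=7$.

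Iterating the extension is not automatic either. For $B=B(T,\{i\})$ one has $Z(B)\neq Z_{min}(B)$, because of the segments ${\rm conv}\{\bar\btau^j,\bar\by^j\}$. Also ${\rm supp}(\bar\btau^j)\cup{\rm supp}(B\bar\btau^j)=[5]\neq[6]$ for $j\in J_0$. So the hypotheses of Proposition~\ref{Pcond} fail, and exposedness of further extensions would need a new proof.

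For $n\ge 7$ you could close the gap with the paper's own results. Theorem~\ref{T5} gives faces of dimension $n$ for odd $n$, and Theorem~\ref{Teven} gives dimension at most $n+3$ for even $n$. Both are at most $(n^2-5n+8)/2$, since $n^2-7n+2>0$ for $n\ge 7$.

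The case $n=6$ is the genuinely missing piece. You need a different matrix in $\mathbb{COP}(6)$, for instance an exposed ray, whose orthogonal face in $\mathbb{CP}(6)$ is maximal and has dimension at most $7$. That is the part of the cited result your proposal does not supply.
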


At the moment, these are the    sharpest bounds reported  in the literature.

\vspace{3mm}

The main  aim   of this paper is to improve the above estimates for $low(n).$ Our approach distinguishes between the cases where $n$ is odd and where it is even. %are Theorems \ref{T5} and \ref{Teven} proved in the next sections.

\section{The  Tight Lower Bound on  the Dimensions of Maximal Faces of the Cone $\cp$
of   Odd-Order Matrices}\label{S.3}

{Before presenting the main result of this section, let us recall some definitions that will be needed.}

{An $n\times n$  matrix $A$  is {\it circulant} (see, for example \cite{Tsitsas}) if  each its row is obtained from the previous one by a cyclic shift, {\it i.e. } there exists a vector
$
 \textbf a=(a_0, a_1, \ldots, a_{n-1})^{\top} \in \mathbb R^n,
$
such that
\[
A =
\begin{pmatrix}
a_0     & a_1     & a_2     & \cdots & a_{n-1} \\
a_{n-1} & a_0     & a_1     & \cdots & a_{n-2} \\
\vdots  & \vdots  & \vdots  & \ddots & \vdots  \\
a_1     & a_2     & a_3     & \cdots & a_0
\end{pmatrix}.
\]
In this case, $A$ is completely determined by its first row and is invariant under cyclic permutations.
}

{A vector $\textbf{u} = (u_1, u_2, \ldots, u_n)^{\top} \in \mathbb{R}^n$ is called  {\it palindromic} if
\[
u_i = u_{n+1-i} \quad \text{for all } i \in[n].
\]}

The following theorem is the first  main result of this paper.
\begin{theorem}\label{T5} For every odd $n\geq 5,$ the   tight    lower bound    $low(n) $
   on the dimensions of the maximal faces of the cone $\cp$ is equal to $n$.
 \end{theorem}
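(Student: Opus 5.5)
Since Lemma \ref{L2} already gives $low(n)\ge n$ for every $n>2$, the whole work is to exhibit, for each odd $n=2m+1\ge 5$, a maximal face of $\cp$ whose dimension is exactly $n$. By Theorem \ref{T1}, it suffices to find a matrix $A\in\cop$ generating an exposed ray of $\cop$ such that $\mathcal F=\cp\cap A^\bot$ has dimension $n$. By Corollary \ref{OO_L}, that dimension equals the rank of the matrices $(\btau^i+\btau^j)(\btau^i+\btau^j)^\top$ over pairs $i\le j$ inside maximal cliques of the minimal zeros graph. So the plan is to construct an exposed copositive matrix whose zero set is as small as possible.

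The candidate I would use is a circulant, Hoffman--Pereira / Horn-type generalization, which is presumably why circulant matrices and palindromic vectors were just introduced. Take $A$ circulant with first row
$$\textbf a=(1,-1,1,1,\ldots,1,-1)^\top ,$$
so that $A_{i,i\pm1}=-1$ and all other entries are $+1$ (indices mod $n$). This is the Horn matrix for $n=5$, and its natural extension for odd $n$. I would then carry out the following steps.

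\begin{enumerate}
\item Verify copositivity. Write $\bt^\top A\bt$ as a sum of squares plus nonnegative terms, using a cyclic pairing argument in the style of the Horn matrix proof. Oddness of $n$ is essential here.
\item Compute $Z_{min}(A)$. I expect the minimal zeros to be the $n$ cyclic shifts $\btau^j=\tfrac12(\textbf e_j+\textbf e_{j+1})$, possibly together with palindromic-type vectors supported on longer consecutive arcs.
\item Identify the minimal zeros graph and its maximal cliques, and check the orthogonality condition $(\btau^i)^\top A\btau^j=0$.
\item Compute the rank in Corollary \ref{OO_L}. If the zeros are only the $n$ vectors $\textbf e_j+\textbf e_{j+1}$ with no edges, the matrices $(\textbf e_j+\textbf e_{j+1})(\textbf e_j+\textbf e_{j+1})^\top$ are linearly independent (their supports on off-diagonal entries are distinct), giving dimension exactly $n$.
\end{enumerate}

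If the true zero set is larger (for $n\ge7$ the matrix above might have extra zeros), I would adjust the first row to a palindromic vector $(1,-\cos\theta,\ldots)$ in Hildebrand's $T$-matrix style, chosen so that the only zeros are the $n$ cyclic shifts of one fixed vector with support of size $m$ or $m+1$. I would then check that their $n$ rank-one matrices are linearly independent, e.g.\ via the circulant structure and its eigenvalues (discrete Fourier transform).

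The main obstacle is to prove that $A$ generates an \emph{exposed} ray of $\cop$ and not merely an extreme one. I would attempt this through the standard criterion: $A$ is exposed if the only copositive matrices $B$ sharing all zeros of $A$ with $B\btau=0$ on the relevant supports are multiples of $A$. Concretely, I would solve the linear system $(B\btau^j)_i=0$ for $i\in M(j)$ (cf.\ (\ref{Mj})) and show that its solution space is one-dimensional. Here the circulant symmetry reduces the system to a few cyclic equations whose solution is forced to be proportional to $\textbf a$ precisely because $n$ is odd. This step, together with pinning down $Z_{min}(A)$ exactly, is where I expect the real effort. The rank computation, by contrast, is routine once the zeros are known.
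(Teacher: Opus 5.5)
\paragraph{The overall strategy is right, but no working matrix is produced.} Your architecture is the paper's: Lemma~\ref{L2} gives $low(n)\ge n$, Theorem~\ref{T1} gives maximality, and Corollary~\ref{OO_L} gives the dimension. The gap is that you never exhibit a matrix that works, and the one concrete candidate you name fails.

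Take the circulant $A$ with first row $(1,-1,1,\ldots,1,-1)$. Consecutive minimal zeros are $A$-orthogonal: $(\mathbf e_j+\mathbf e_{j+1})^\top A(\mathbf e_{j+1}+\mathbf e_{j+2})=-1+1+1-1=0$. Consequences:
\begin{itemize}
\item The minimal zeros graph is a cycle, and $Z(A)$ contains all normalized vectors $(a,a+b,b)$ on three cyclically consecutive indices.
\item Corollary~\ref{OO_L} therefore adds, to the $n$ matrices $(\mathbf e_j+\mathbf e_{j+1})(\mathbf e_j+\mathbf e_{j+1})^\top$, the $n$ symmetrized products of $\mathbf e_j+\mathbf e_{j+1}$ and $\mathbf e_{j+1}+\mathbf e_{j+2}$. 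Each of these is the only generator with a nonzero $(j,j+2)$ entry.
\item So the face has dimension $2n$, already $10$ for the Horn matrix at $n=5$, and your step~4 is false.
\end{itemize}

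For $n\ge 6$ this $A$ is not even extremal. For $\btau^j=\frac12(\mathbf e_j+\mathbf e_{j+1})$ one has $M(j)=\{j-1,j,j+1,j+2\}$. The extremality system $\mathbf e_k^\top D\btau^j=0$, $k\in M(j)$ (the criterion invoked in the proof of Proposition~\ref{Pcond}), therefore involves only entries of $D$ at cyclic distance at most $2$, and leaves every entry at distance $\ge 3$ free. Concretely, $A-\varepsilon(\mathbf e_i\mathbf e_k^\top+\mathbf e_k\mathbf e_i^\top)$ stays copositive for small $\varepsilon>0$ whenever $i,k$ are at cyclic distance $\ge 3$. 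Hence Theorem~\ref{T1} cannot be applied.

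Your fallback, adjusting the first row so that the only zeros are the $n$ shifts of one vector, specifies no parameters. It proves none of copositivity, extremality or the zero set, and that is where the whole content of the theorem lies.

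\paragraph{Your exposedness step targets the wrong system.} A copositive $X$ sharing the zeros of $A$ is only known to satisfy $\mathbf e_i^\top X\btau^j=0$ for $i\in\mathcal J(\btau^j,A)$, and $\mathbf e_i^\top X\btau^j\ge 0$ on the rest of $M(j)$. The equations on all of $M(j)$ form the extremality system. A one-dimensional solution space for them yields exposedness only once you also know $M(j)=\mathcal J(\btau^j,A)$. Establishing that coincidence is exactly the new step in the paper.

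\paragraph{What the paper does instead.} The missing ingredient is Hildebrand's matrix \eqref{AA}. Its copositivity, extremality and zero set are already known: exactly $n$ isolated minimal zeros, supported on the sets $I(j)$ of size $n-2$, all carrying a common positive palindromic pattern $\mathbf u$. The paper then verifies \eqref{eq2}:
\begin{itemize}
\item By palindromy, the two entries of $\mathcal A\btau^j$ outside $I(j)$ coincide.
\item If both vanished, summing all entries of $\mathcal A\btau^j$ would give $(\alpha+2\beta+2)\sum_k u_k=0$.
\item This contradicts $\alpha+2\beta+2=4(1-\cos\frac{\pi}{n+1})(1-\cos\frac{3\pi}{n+1})>0$.
\end{itemize}
Since $Z(\mathcal A)$ is finite, the face is ${\rm cone}\{\btau^j(\btau^j)^\top : j\in[n]\}$. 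It has dimension at most $n$, hence exactly $n$ by Lemma~\ref{L2} (or by \eqref{eq1}).
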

 {\bf Proof.} Given an odd $n\geq 5,$ following paper \cite{Hild}, let us  consider a symmetric
  matrix ${\mathcal{A}}\in \mathbb{S}(n)$  whose entries are constructed  according to the following rules{:}
\begin{equation}{\mathcal{A}}_{ij}=\left\{ \begin{array}{ll}
\alpha & \mbox{ if } i=j,\cr
\beta & \mbox{ if } |i-j|\in {\{1,n-1\}},\cr
1& \mbox{ if } |i-j|\in \{2,n-2\},\cr
0& \mbox{ if } |i-j|\in \{3,...,n-3\},\end{array}\right. \ i,j\ \in [n],\label{AA}\end{equation}
where $\alpha:=2(1+2\cos\frac{\pi}{n+1}\cos\frac{3\pi}{n+1})$, $\beta:=-2(\cos\frac{\pi}{n+1}+\cos\frac{3\pi}{n+1})$.

    \vspace{2mm}

For example, for $n=7,$
 matrix ${\mathcal{A}}$ has the form
$${\mathcal{A}}=\begin{pmatrix}
\alpha & \beta & 1 & 0 &  0& 1 & \beta\cr
\beta &\alpha & \beta & 1 & 0 & 0& 1 \cr
1 &\beta &\alpha & \beta & 1 & 0 & 0\cr
0&1 &\beta &\alpha & \beta & 1  & 0\cr
0& 0&1 &\beta &\alpha & \beta & 1 \cr
1&0& 0&1 &\beta &\alpha & \beta \cr
 \beta  &1&0& 0&1 &\beta &\alpha\cr
\end{pmatrix}.$$

{Consider} the following ordered index sets  of cardinality $n -2$:
\begin{equation} I(i):=[n]\setminus \{  n-i,\, n-i+1\} \mbox{ for  }\ i\in [n-1] \mbox{  and  } I(n):=[n]\setminus\{ 1,\, n\}.\label{sets}\end{equation}

It was shown in \cite{Hild} that
\begin{itemize}
\item the matrix ${\mathcal{A}}$ is a  circulant  extremal    copositive matrix;

    \vspace{-3mm}

\item the set of zeros of matrix ${\mathcal{A}}$ consists of $n$ minimal zeros ${\btau}^j$, $j \in [n]$;

    \vspace{-3mm}

\item there exists  a positive palindromic vector  ${\bf u} \in {\mathbb R}^{n-2}_{++}  $
  such that $${\rm  supp}( {\btau}^j) = I(j) \mbox{  and } (\tau^j_k, k \in I(j))^\top={\bf u} \mbox{ for all } j  {\in [n]}.$$
\end{itemize}

Since the matrix ${\mathcal{A}}$ is extremal  and its  set of zeros  consists of $n$ minimal zeros ${\btau}^j$, $j\in [n]$,
we can conclude that
\begin{equation}{\rm rank}({\btau}^j, \,j\in [n])=n.\label{eq1}\end{equation}

Let us show that the matrix ${\mathcal{A}}$ generates an  exposed ray  of the cone $ \cop$.
 It follows from   \cite{KT-Mathematics}  that to  this end,  it is  enough  to show  that
\begin{equation} {\rm supp}({\btau}^j)=[n]\setminus {\rm supp}({\mathcal{A}}{\btau}^j) \ \forall j\in [n].\label{eq2}\end{equation}

  Since  $n$ is odd,   we can write   $n-2=2q+1$   for some  $q\geq 1.$ Then the vector ${\bf u}$  introduced above
takes the form  ${\bf u}=(u_1, u_2, ...,u_{n-2})^\top > \textbf{0},$ where
\begin{equation} u_k=u_{2q-k+2} >0 \ \forall k  \in [q]  ,\ \ u_{q+1}>0.\label{eq3}\end{equation}
By construction,   given  $j\in [n] $ and  the corresponding zero ${\btau}^j,$ we have
\begin{equation} (\tau^j_k, k \in I(j))^\top={\bf u}, \ \tau^j_k=0 \mbox{ for } k \in [n]\setminus I(j).\label{23_1}\end{equation}
Since ${\btau}^j$ is a zero of the matrix ${\mathcal{A}}$, the following relations hold true
$${\mathbf{e}_i^\top \mathcal{A}\btau^j = 0}
\quad \forall i \in I(j), \quad
{\mathbf{e}_i^\top \mathcal{A}\btau^j = \lambda_i \ge 0}
\quad \forall i \in [n]\setminus I(j).$$
It follows from these relations and (\ref{23_1}) that
\bea& \sum\limits_{k\in I(j)}\mathcal{A}_{ik}u_k=0 \ \forall i \in I(j),\label{eq4}\\
&\sum\limits_{k\in I(j)}\mathcal{A}_{ik}u_k=:\lambda_i \geq 0\ \forall i \in [n]\setminus I(j).\label{eq5}\eea
In view of the specific structure of the matrix $\mathcal{A}$ and  relations (\ref{eq3}),  we readily obtain
$\lambda_n=\lambda_{n-1}$.

Let us show that $\lambda_n=\lambda_{n-1}>0.$  Suppose, to the contrary, that $\lambda_n=\lambda_{n-1}=0.$
Taking into account these equalities and  summing {equalities} in (\ref{eq4}), (\ref{eq5}),
we obtain
$$(2 +2\beta+\alpha)(u_1+u_2+...+u_{n-2})=0, \mbox{ where }  u_1+u_2+...+u_{n-2}>0$$
 and
$$2 +2\beta+\alpha=4(1-\cos(\frac{\pi}{n+1}))(1-\cos(\frac{3\pi}{n+1}))>0.$$
The contradiction obtained proves  that $\ \lambda_n=\lambda_{n-1}>0.\ $  Consequently,\\
${\rm supp}({\mathcal{A}}{\btau}^j)=[n]\setminus I(j)$ and, by construction,  we have
that ${\rm supp}({\btau}^j)=I(j).$ Thus, the  equalities  (\ref{eq2}) hold true.

Hence, we have shown that the matrix ${\mathcal{A}}$ {generates }  an exposed ray of the cone $\cop$.
 {Then} it follows from Theorem \ref{T1} that the set ${\mathcal F}:=\cp\cap {\mathcal{A}}^\bot$ is a maximal face of the cone $\cp.$
This face  can be presented in the form
$$\mathcal F=\{B=\sum\limits_{j=1}^n\gamma_j{\btau}^j({\btau}^j)^\top, \ \gamma_j\geq 0 \ \forall j\in [n]\}.$$
It follows from th{e} representation {above} and equality (\ref{eq1}) that
$${\rm dim}(\mathcal F)={\rm dim}({\rm span}\{{\btau}^j({\btau}^j)^\top, \  j\in [n]\})=n.$$

 Taking into account Lemma \ref{L2}, we conclude that  for an  odd $n\geq 5$,   it holds $ low(n)=n$. \ \ \ $\Box$

\vspace{3mm}

  Therefore,  for any odd integer $n\geq 5,$ we have  determined   the exact value for  the  tight  lower bound on the dimensions of maximal faces of the cone $\cp$.

\section{Estimation of the Maximal Face Dimensions for the Cone $\cop$ of
Even-Order Matrices}\label{S.4}
\subsection{Preliminary Results}

In this subsection, we introduce a special set of copositive matrices from the cone $\mathbb{COP}(n+1)$ constructed on the basis of a given extremal matrix $A \in \cop$.
We derive several properties of these matrices in $\mathbb{COP}(n+1)$ and of their associated sets of minimal zeros.

\begin{proposition}\label{19P_1}
Let a matrix $A\in \cop$ and a set $I\subset [n]$, $I\not=\emptyset,$  be given.
Denote ${\bf{e_*}}:=\sum\limits_{i \in I}{\bf{e}}_i\in \mathbb R^n_+ $ and
\begin{equation} B=B(A,I):=\begin{pmatrix}
A&A{\bf{e_*}}\cr
{\bf{e}}^\top_* A & \,{\bf{e}}^\top_* A{\bf{e_*}}\end{pmatrix}.\label{B_matrix}\end{equation}
Then $B\in {\mathbb{COP}}(n+1).$
\end{proposition}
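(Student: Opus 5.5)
The plan is to write $B$ as a congruence transformation of $A$ by a nonnegative matrix, since copositivity is preserved under such transformations. Define the $n\times(n+1)$ matrix
$$P:=\begin{pmatrix} I_n & {\bf e}_*\end{pmatrix},$$
where $I_n$ is the $n\times n$ identity matrix. All entries of $P$ are nonnegative, because ${\bf e}_*\in\mathbb R^n_+$. A direct block multiplication gives
$$P^\top A P=\begin{pmatrix} A & A{\bf e}_*\cr {\bf e}_*^\top A & {\bf e}_*^\top A{\bf e}_*\end{pmatrix}=B(A,I),$$
so $B$ is symmetric, i.e. $B\in\mathbb S(n+1)$.

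Next, I verify copositivity from the definition. Take an arbitrary ${\bf t}=(\bar{\bf t}^\top, t_{n+1})^\top\in\mathbb R^{n+1}_+$, with $\bar{\bf t}\in\mathbb R^n_+$ and $t_{n+1}\ge 0$. Then
$${\bf t}^\top B{\bf t}=(P{\bf t})^\top A(P{\bf t}), \qquad P{\bf t}=\bar{\bf t}+t_{n+1}{\bf e}_*\in\mathbb R^n_+ .$$
Since $A\in\cop$, the right-hand side is nonnegative. Hence ${\bf t}^\top B{\bf t}\ge 0$ for every ${\bf t}\in\mathbb R^{n+1}_+$, that is, $B\in\mathbb{COP}(n+1)$.

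No step here is a real obstacle; the only point needing care is the block identity $P^\top AP=B$, which is immediate. The hypothesis $I\neq\emptyset$ is not needed for copositivity itself. It is presumably there so that the added row and column are nontrivial, which matters for the later analysis of the zeros of $B$. One remark is useful for that later analysis: if $\btau$ is a zero of $A$, then both $(\btau^\top,0)^\top$ and any ${\bf t}\ge 0$ with $P{\bf t}=\btau$ are zeros of $B$.
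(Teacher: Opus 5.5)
Your proof is correct and is essentially the paper's argument. The paper verifies directly that $\bar{\bt}^\top B\bar{\bt}=(\bt+t_0{\bf e}_*)^\top A(\bt+t_0{\bf e}_*)\ge 0$, which is your identity $\mathbf{t}^\top B\mathbf{t}=(P\mathbf{t})^\top A(P\mathbf{t})$ with the congruence $B=P^\top AP$ left implicit; your observation that $I\neq\emptyset$ is not needed matches the paper's own remark that ${\bf e}_*$ may be replaced by any nonnegative vector.
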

{\bf Proof.} Consider a vector  $\bar \bt=(\bt^\top, t_0)^\top\in \mathbb R^{n+1}_+,$ where $ \bt \in \mathbb R^n_+$ and $t_0\in \mathbb R_+.$
Then
\begin{equation} \bar \bt^\top B\bar \bt=\bt^\top A \bt+2\bt^\top A{\bf{e_*}}t_0 +{\bf{e_*}}^{\top} A{\bf{e_*}}t^2_0=
 (\bt+t_0{\bf{e_*}})^\top A (\bt+t_0{\bf{e_*}}),\label{19_1}\end{equation}
and, taking into account that $A\in \cop$ and $(\bt+t_0{\bf{e_*}})\in {\mathbb{ R}}^n_+,$ we  can conclude that
 $${\bar \bt}^\top B\bar \bt=(\bt+t_0{\bf{e_*}})^\top A (\bt+t_0{\bf{e_*}})\geq 0.$$

Thus, we have shown that for the matrix $B$ in the form (\ref{B_matrix}), it holds
$B\in {\mathbb{COP}}(n+1).$ $\ \Box$

\vspace{2mm}

Note that it is easy to see that Proposition \ref {19P_1}  holds true if we replace $\bf{e_*}$ by any vector ${\bf{a_*}}\in \mathbb R^n_+.$

\vspace{2mm}

Let $Z_{min}(A)=\{{\btau}^j, j \in J\} $  be the set of all normalized minimal zeros  of
 a copositive matrix $A\in \cop$  and let $I\subset [n]$, $I\not=\emptyset.$
Denote
\begin{equation}\label{**J}J_{0}= J_0(A,I):=\{j \in J:I\subset {\rm supp}({\btau}^j)\}.\end{equation}
\begin{proposition}\label{P3}  Consider  a matrix  $A\in \cop$  with  a finite set of normalized zeros,  that is
 $ Z(A)= Z_{min}(A)= \{ {\btau}^j, j \in J \}.$
Let  the sets  $I$ and $J_{0}$, the vector  $\bf{e_*}$,   and  the matrix  $B=B(A,I)$ be defined  as above. Then the vectors
\begin{equation}\bar {\btau}^j{:}=\begin{pmatrix}
{\btau}^j\cr
0\end{pmatrix},   j \in J, \mbox{ and }
 {\bar \by}^j:=  \frac{1}{\mu_j} \begin{pmatrix}
{\btau}^j-\sigma_j\bf{e_*}\cr
\sigma_j\end{pmatrix},  j \in J_{0},\label{zerotau}\end{equation}
 where
$$ \sigma_j:=\min\{\tau^j_k, k \in I\}>0,\
\mu_j:=1-\sigma_{j}(|I|-1)>0 \ \mbox{ for } j \in J_{0},$$
are  the normalized minimal  zeros of  the  matrix $B$ and the set  of all normalized zeros of $B$  has the form
\begin{equation} Z(B)=\{\bar {\btau}^j, j \in J\setminus J_{0}\}\bigcup\limits_{j \in J_0} {\rm conv}\{\bar {\btau}^j, {\bar \by}^j\}.\label{zeroset}\end{equation}
\end{proposition}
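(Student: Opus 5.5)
The approach is to use identity (\ref{19_1}): for $\bar\bt=(\bt^\top,t_0)^\top\in\mathbb R^{n+1}_+$ we have $\bar\bt^\top B\bar\bt=\bz^\top A\bz$ with $\bz:=\bt+t_0{\bf e_*}\in\mathbb R^n_+$. Hence $\bar\bt$ is a zero of $B$ if and only if $\bz$ is a zero of $A$ (note $\bz\neq \textbf 0$ whenever $\bar\bt\neq\textbf 0$, since $I\neq\emptyset$). Because $Z(A)=Z_{min}(A)$ is finite, every zero of $A$ is a positive multiple of some $\btau^j$. So the zeros of $B$ are exactly the nonnegative vectors $(\bt^\top,t_0)^\top$ with $\bt+t_0{\bf e_*}=c\,\btau^j$ for some $j\in J$ and $c>0$.

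I would parametrize these zeros. Fix $j$ and $c>0$, and set $\bt=c\btau^j-t_0{\bf e_*}\ge \textbf 0$. The constraint on $t_0$ is $0\le t_0\le c\,\tau^j_k$ for all $k\in I$. If $j\notin J_0$, some $k\in I$ has $\tau^j_k=0$, which forces $t_0=0$ and gives only the multiples of $\bar\btau^j$. If $j\in J_0$, then $\sigma_j>0$ and $t_0\in[0,c\sigma_j]$. The two endpoints give the vectors $c\,\bar\btau^j$ and $c\,(\btau^j-\sigma_j{\bf e_*},\sigma_j)$, and the admissible set is their segment, since everything is linear in $t_0$. For normalization, $\|\bar\bt\|_1=c(\|\btau^j\|_1-t_0|I|/c+t_0/c)=c-t_0(|I|-1)$. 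At the endpoint this gives $c\,\mu_j$ with $\mu_j=1-\sigma_j(|I|-1)$, which yields the formula for $\bar\by^j$. Positivity $\mu_j>0$ follows because $\|\bar\by^j\|_1$-type bookkeeping gives $\mu_j\ge \|\btau^j-\sigma_j{\bf e_*}\|_1+\sigma_j>0$; equivalently $|I|\sigma_j\le\sum_{k\in I}\tau^j_k\le 1$. Normalizing the whole segment yields a segment between the normalized endpoints, namely ${\rm conv}\{\bar\btau^j,\bar\by^j\}$, because normalization of a segment of nonnegative vectors is again the segment between the normalized endpoints. This proves (\ref{zeroset}).

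It remains to show that the listed vectors are minimal zeros and that no others exist. Any zero lies in one of the sets in (\ref{zeroset}). An interior point of the segment ${\rm conv}\{\bar\btau^j,\bar\by^j\}$ has support ${\rm supp}(\btau^j)\cup\{n+1\}$, which strictly contains ${\rm supp}(\bar\btau^j)$, so it is not minimal. Hence the minimal zeros can only be among the $\bar\btau^j$ and $\bar\by^j$.

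\emph{Main obstacle: minimality of each candidate.} For minimality, suppose a zero $\bar\bt$ has support strictly contained in the support of a candidate. Since $\bar\bt$ lies on some segment for an index $i$, comparing the first $n$ coordinates relates ${\rm supp}(\btau^i)$ or ${\rm supp}(\btau^i-\sigma_i{\bf e_*})$ to the candidate's support.

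\textbf{Case $\bar\btau^j$.} Here $t_0=0$, so $\bar\bt=\bar\btau^i$ with ${\rm supp}(\btau^i)\subsetneq{\rm supp}(\btau^j)$. This contradicts the minimality of $\btau^j$ for $A$.

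\textbf{Case $\bar\by^j$.} Write $\bz=\bt+t_0{\bf e_*}$, a positive multiple of $\btau^i$. If $t_0=0$, then $\bar\bt$ is a multiple of $\bar\btau^i$, and ${\rm supp}(\btau^i)\subseteq{\rm supp}(\btau^j-\sigma_j{\bf e_*})\subseteq{\rm supp}(\btau^j)$ gives $i=j$ by minimality. But ${\rm supp}(\btau^j-\sigma_j{\bf e_*})$ misses an index of $I$ where the minimum $\sigma_j$ is attained, which is a contradiction. If $t_0>0$, then ${\rm supp}(\bz)\subseteq {\rm supp}(\bt)\cup I\subseteq{\rm supp}(\btau^j)$, so again $i=j$ by minimality of $\btau^j$. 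The point $\bar\bt$ then lies on segment $j$, and the only point of that segment whose support avoids the coordinates vanishing in $\bar\by^j$ is the normalized $\bar\by^j$ itself, so the inclusion is not strict.

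Distinctness of the listed vectors is immediate from their supports and last coordinates.
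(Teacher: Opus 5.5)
Your proposal is correct and follows essentially the same route as the paper: you reduce zeros of $B$ to zeros of $A$ through identity (\ref{19_1}), use finiteness of $Z(A)$ to parametrize each fiber by $t_0\in[0,c\sigma_j]$ and obtain (\ref{zeroset}), and then prove minimality of $\bar{\by}^j$ by contradicting the minimality of $\btau^j$. Your split into $t_0=0$ and $t_0>0$ is a slightly tidier replacement for the paper's cases (A)--(C) with the auxiliary sets $Q(j)$ and $\Delta I(j)$; note that your step ``$i=j$ by minimality'' in the $t_0>0$ case, like the paper's case (B), tacitly uses the standard fact that distinct normalized minimal zeros of $A$ have distinct supports.
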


{\bf Proof.} It is easy to see that by construction,  the  vectors $\bar {\btau}^j,$ $ \in J, $ defined in (\ref{zerotau}),  are normalized zeros of the matrix $B.$

Suppose that $J_{0} \neq \emptyset$, and consider  the  vectors ${\bar \by}^j,$ $j\in J_0,$ defined in  (\ref{zerotau}).
For  each  $j\in J_{0},$ due to the special choice of $\sigma_j$ we have  ${\bar \by}^j \geq  {\bf 0}$, and due to the special choice of $\mu_j$ we have $||{\bar \by}^j||_1=1.$
 It follows from (\ref{19_1}) that
 $$({\bar \by}^j)^\top B {\bar \by}^j=(1/\mu_j^2)({\btau}^j-\sigma_j {\bf{e_*}}+\sigma_j {\bf{e_*}})^\top A ({\btau}^j-\sigma_j{\bf{e_*}}+\sigma_j {\bf{e_*}})$$
$${=}(1/\mu_j^2)({\btau}^j)^\top A{\btau}^j=0 \ \forall j \in J_{0}.$$
 Consequently, vectors ${\bar \by}^j,$ $j\in J_0,$ are normalized zeros of $B$.
  Thus, we have shown that all vectors in  (\ref{zerotau}) are normalized zeros of the matrix $B$.

Let us show that for any $j\in J_0$ and any $\alpha\in [0,1],$ the  vector
$$\bt(j,\alpha):=\alpha\bar{\btau}^j+(1-\alpha){\bar \by}^j$$ is a normalized zero of  the matrix   $B$.
Notice that by construction, $\bt(j,\alpha) {\geq} {\bf 0}$ and $||\bt(j,\alpha)||_1=1.$
Let us calculate
$$(\bt(j,\alpha))^\top B\bt(j,\alpha)=2\alpha(1-\alpha)(\bar{\btau}^j)^\top B{\bar \by}^j$$
$$=\frac{2\alpha(1-\alpha)}{\mu_j}{\begin{pmatrix}
{\btau}^j\cr
0\end{pmatrix}^\top}
\begin{pmatrix}
A&A\bf{e_*}\cr
{\bf{e}}^\top_* A&{\bf{e}}^\top_* A\bf{e_*}\end{pmatrix}\begin{pmatrix}
{\btau}^j-\sigma_j\bf{e_*}\cr
\sigma_j\end{pmatrix}
 {=}\frac{2\alpha(1-\alpha)}{\mu_j}({\btau}^j)^\top A{\btau}^j=0.$$
Hence, we have shown that
$${\cal T}:=\{\bar {\btau}^j, j \in J\setminus   {J_0}\}\bigcup\limits_{j \in   {J_0}} {\rm conv}\{\bar {\btau}^j, {\bar \by}^j\}\subset Z(B).$$

Now let us consider any vector $\bar \bz=(\bz^\top, z_0)^\top \in Z(B).$   {From the definition of $\bar \bz$ and $B$, it follows} that $||\bz||_1+z_0=1$ and
${\tilde \bz}:=(1/\mu)(\bz+z_0{\bf{e_*}})\in Z(A)$ with
$$\mu:=|| \bz+ z_0 {\bf{e_*}}||_{1}=1+z_0(|I|-1).$$
Taking into account that $Z(A)= {Z_{min}(A)}=\{{\btau}^j, j \in J\}$, we conclude that there exists an index  $j_*\in J$ such that
$\tilde \bz=\btau^{j_*}$ and, consequently,  $\bz=\mu\btau^{j_*}-z_0\bf{e_*}$.
Suppose  that  $z_0=0$. Then $\bar \bz=((\btau^{j_*})^\top, 0)^\top=\bar{\btau}^{j_*}$ and   hence   $\bar \bz\in {\cal T}.$

Now suppose that $z_0>0$. From  the relations  $\bz=\mu\btau^{j_*}-z_0e_*$ and
 $\bz\geq{\bf 0}${, it follows} that
$I\subset {\rm supp}(\btau^{j_*})$ and therefore $j_*\in J_0$ and $0<z_0\leq \mu\sigma_{j_*}.$
This implies that $\bar \bz=((\mu\btau^{j_*}-z_0{\bf{e_*}})^\top, z_0)^\top.$

It is not difficult to check that
$$\bar \bz=\begin{pmatrix}
\mu\btau^{j_*}-z_0\bf{e_*}\cr
z_0\end{pmatrix}=\alpha_*\bar \btau^{j_*}+(1-\alpha_*){\bar \by}^{j_*} \mbox{ with } \alpha_*:=1-\frac{z_0\mu_{j_*}}{\sigma_{j_*}}.$$
 Since $0<z_0\leq\mu\sigma_{j_*}, $ $\mu_{j_*}>0$, and $\sigma_{j_*}>0$, it  is   {easy to show}  that $\alpha_*\in [0,1).$ Consequently,
 $\bar \bz\in {\rm conv}\{\bar\btau^{j_*},{\bar \by}^{j_*}\}\subset {\cal T}.$

Thus, we have shown that for any $\bar \bz\in Z(B)$,  the inclusion   $\bar \bz\in {\cal T}$ holds, and   hence  $Z(B)\subset {\cal T}.$
 Combining  this  with  the  inclusion ${\cal T}\subset Z(B)$ proved above, we obtain   $Z(B)= {\cal T}.$  Therefore, equality (\ref{zeroset}) holds true.

Under   the stated  assumptions, it is evident that  the  vectors $\bar {\btau}^j,$ $ \in J, $
defined in (\ref{zerotau}) are normalized minimal zeros of the matrix $B$.

Let us show that the vectors $ {\bar \by}^j$, $ j\in J_0,$ are minimal zeros of the matrix $B$ as well.  To do this, let us introduce index sets
$$\Delta I(j):={\rm supp}({\btau}^j)\setminus I,\ Q(j):=\{k \in I:\tau^j_k>\sigma_j\} \ \forall j \in J_0.$$
Since $I\subset {\rm supp}({\btau}^j)$ for all $j \in J_0$, we have
\begin{equation} {\rm supp}({\btau}^j)=\Delta I(j)\cup I \ \ \forall j \in J_0,\label{12-1}\end{equation}
and it is easy to see that
\begin{equation} {\rm supp}({\btau}^j-\sigma_j{\bf{e_*}})=Q(j)\cup \Delta I(j) \ \, \forall j \in J_0.\label{12-2}\end{equation}
Note that by construction,
\begin{equation} Q(j)\cap \Delta I(i) =\emptyset \ \, \forall j \in J_0, \ \, \forall i \in J_0.\label{12-3}\end{equation}

Suppose that there exists $j_0\in J_0$ such that the vector ${\bar \by}^{j_0}\in Z(B)$ is not a minimal zero of the matrix $B$. Then there exists $\bar \bt \in Z(B)$, $\bar \bt\not={\bar \by}^{j_0},$
such that
\begin{equation} {\rm supp}(\bar \bt)\subset {\rm supp}({\bar \by}^{j_0}).\label{12-4}\end{equation}
Since $\bar \bt\in Z(B)$, it follows from (\ref{zeroset}) that  one of  the following  cases  may  occur:
\begin{itemize}
\item[(A)] There exists $j_*\in J$ such that $\bar \bt=\bar\btau^{j_*}.$

\vspace{-3mm}

\item[(B)]  There exists $j_*\in J_0\setminus j_0$ such that $\bar \bt= {\bar \by}^{j_*}$.

\vspace{-3mm}

\item[(C)]   There exist $j_*\in J_0$ and $\alpha\in (0,1)$ such that $\bar \bt=\alpha\bar\btau^{j_*}+(1-\alpha) {\bar \by}^{j_*}.$
\end{itemize}

 We consider these cases separately. In  case (A), we have  $\bar \bt=\bar\btau^{j_*}$. Hence it follows from (\ref{12-4}) that
\begin{equation} \begin{split}&{\rm supp}({\btau}^{j_*})\subset {\rm supp}({\btau}^{j_0}-\sigma_{j_0}{\bf{e_*}})\subset {\rm supp}({\btau}^{j_0}),\\
&{\rm supp}({\btau}^{j_0}-\sigma_{j_0}{\bf{e_*}})\neq {\rm supp}({\btau}^{j_0}).\end{split}\label{13-1}\end{equation}
This contradicts the assumption that ${\btau}^{j_0} $ is a minimal zero of the matrix $A$.  Therefore, case (A) is not possible.

Suppose that case    (B)  occurs. Then, from  (\ref{12-4})  we deduce  that
$${\rm supp}(\btau^{j_*}-\sigma_{j_*}{\bf{e_*}})\subset {\rm supp}({\btau}^{j_0}-\sigma_{j_0}{\bf{e_*}}), \ \mbox{ where } j_*\not=j_0,$$
 and, taking into account (\ref{12-2}), we obtain
$$Q(j_*)\cup \Delta I(j_*)\subset Q(j_0)\cup \Delta I(j_0). $$
This inclusion, together with  (\ref{12-3}) impies  the inclusion $\Delta I(j_*)\subset \Delta I(j_0)$,   from which  and (\ref{12-1}) we obtain
${\rm supp}({\btau}^{j_*})\subset {\rm supp}({\btau}^{j_0}).$ This again contradicts the fact that $\btau^{j_0} $ is a minimal zero of the matrix $A$. Hence, case (B) cannot occur.

Consider case (C). We  then  have
$$ {\rm supp}(\bar \bt)={\rm supp}(\bar \btau^{j_*})\cup {\rm supp}({\bar \by}^{j_*})={\rm supp}( \btau^{j_*})\cup\{n+1\}.$$
Consequently, ${\rm supp}({\bf t})={\rm supp}( \btau^{j_*})$ and it follows from (\ref{12-4}) that
relations (\ref{13-1})  hold.  However,   this again contradicts the assumption that $\btau^{j_0} $ is a minimal zero of the matrix $A.$

Thus, we have shown that all vectors in (\ref{zerotau}) are normalized minimal zeros of the matrix $B$.

It follows from (\ref{zeroset})  that  any normalized zero of $B$ can be represented as a convex combination of
some vectors from (\ref{zerotau}).   Hence the matrix $B$ has no other normalized  minimal zeros except the zeros defined in (\ref{zeroset}).
$\  \Box$

\vspace{4mm}

Let  matrices $A$  and $B$, vector  ${\bf{e_*}}$,  and sets $I$ and  $J_0$ be defined  as above.
Consider the set  ${Z_{min}}(A)=\{{\btau}^j, j \in J\} $ of all normalized minimal zeros of the matrix $A$.
For $j \in J,$ since ${\btau}^j$ is a zero of $A$, we have (see, {\it e.g.},  \cite{KT-Mathematics}):
$$\mathbf{e}^\top_kA{\btau}^j=0\ \forall k\in {\rm supp}({\btau}^j);\  \mathbf{e}^\top_kA{\btau}^j\geq 0\ \forall k\in [n]\setminus {\rm supp}({\btau}^j).$$
It follows from these relations that
\begin{equation} {\rm supp}({\btau}^j)\subset  M(j) \ \  \forall j   \in J,\label{20_10}\end{equation}
\begin{equation} {\bf{e}}^\top_* A{\btau}^j=0,\ I\subset M(j )\ \  \forall j \in  J_0,\label{19_3}\end{equation}
where the sets $M(j),$ $j \in J,$ are defined in (\ref{Mj}).

\begin{proposition}\label{P_extreme} Let $A\in \cop$ be   a matrix whose  set of all normalized zeros consists of a finite number of vectors:
$Z(A)= Z_{min}(A)= \{{\btau}^j, j \in J\}.$    Suppose that $A$ is  an  extremal  {matrix of $\cop$} and
\begin{equation} \bigcup\limits_{j\in J_0}M(j)=[n].\label{20_11}\end{equation}
Then the matrix $B=B(A,I)\in {\mathbb{S}}(n+1)$ is an extremal copositive matrix  of the cone $\mathbb{COP}(n+1).$

\end{proposition}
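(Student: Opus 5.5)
We prove extremality of $B$ directly from the definition. Suppose $B=B_1+B_2$ with $B_1,B_2\in\mathbb{COP}(n+1)$; we must show that each $B_i$ is a nonnegative multiple of $B$. Write a general decomposition in block form,
$$B_i=\begin{pmatrix} A_i & \mathbf{b}_i\cr \mathbf{b}_i^\top & c_i\end{pmatrix},\quad i=1,2,$$
so that $A=A_1+A_2$, $A\mathbf{e_*}=\mathbf{b}_1+\mathbf{b}_2$ and $\mathbf{e}_*^\top A\mathbf{e_*}=c_1+c_2$. The leading principal $n\times n$ blocks $A_i$ of copositive matrices are copositive. Since $A$ is extremal in $\cop$, we get $A_i=\gamma_iA$ with $\gamma_i\ge 0$ and $\gamma_1+\gamma_2=1$. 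The task then reduces to proving that $\mathbf{b}_i=\gamma_iA\mathbf{e_*}$ and $c_i=\gamma_i\mathbf{e}_*^\top A\mathbf{e_*}$.

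The key tool is the standard fact about zeros. If $B=B_1+B_2$ with both terms copositive and $\bar\bt\in Z(B)$, then $\bar\bt^\top B_i\bar\bt=0$, so $\bar\bt$ is a zero of each $B_i$. Being a zero of a copositive matrix, it then satisfies $\mathbf{e}_k^\top B_i\bar\bt=0$ for all $k\in{\rm supp}(\bar\bt)$ and $\mathbf{e}_k^\top B_i\bar\bt\ge 0$ otherwise.

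By Proposition \ref{P3}, for each $j\in J_0$ the whole segment ${\rm conv}\{\bar\btau^j,\bar\by^j\}$ lies in $Z(B)$. Hence both $\bar\btau^j$ and $\bar\by^j$ are zeros of $B_i$, and so is their midpoint, whose support is ${\rm supp}(\btau^j)\cup\{n+1\}$. The bilinear form on the segment vanishes, which gives $(\bar\btau^j)^\top B_i\bar\by^j=0$. A more useful consequence is that the vector $\mathbf{w}:=\mu_j\bar\by^j-\bar\btau^j=(-\sigma_j\mathbf{e}_*^\top,\sigma_j)^\top$ satisfies $\mathbf{e}_k^\top B_i\mathbf{w}=0$ for every $k$ in the support of the interior point, that is, for $k\in{\rm supp}(\btau^j)\cup\{n+1\}$. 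Indeed, both $\bar\btau^j$ and $\bar\by^j$ give zero rows on this support, because the interior zero does and the endpoints are zeros.

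Dividing by $\sigma_j$, this says that $(B_i)_{k,n+1}=(A_i\mathbf{e_*})_k$ for $k\in{\rm supp}(\btau^j)$ and $c_i=\mathbf{e}_*^\top\mathbf{b}_i$. Since $A_i=\gamma_iA$, we obtain $(\mathbf{b}_i)_k=\gamma_i(A\mathbf{e_*})_k$ for all $k\in{\rm supp}(\btau^j)$, $j\in J_0$.

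To extend this to all $k\in[n]$, we use the inequality part together with hypothesis (\ref{20_11}). Fix $k\in M(j)$ for some $j\in J_0$, so $(A\btau^j)_k=0$. Since $\bar\btau^j$ is a zero of $B_i$, the $k$-th row gives $(A_i\btau^j)_k=\gamma_i(A\btau^j)_k=0$. Now apply the same sign conditions at $\bar\by^j$ and at the interior point. For an index $k\notin{\rm supp}$ we have $\mathbf{e}_k^\top B_i\mathbf{w}\ge$ something that sums to $0$ over $i=1,2$, because $\mathbf{e}_k^\top B\mathbf{w}=\sigma_j(-(A\mathbf{e_*})_k+(A\mathbf{e_*})_k)=0$. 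More carefully: $\mathbf{e}_k^\top B_i\bar\by^j\ge0$ for $i=1,2$, and these sum to $\mathbf{e}_k^\top B\bar\by^j=(A\btau^j)_k/\mu_j=0$, so each is zero. Combined with $\mathbf{e}_k^\top B_i\bar\btau^j=0$, this yields $\mathbf{e}_k^\top B_i\mathbf{w}=0$, i.e. $(\mathbf{b}_i)_k=\gamma_i(A\mathbf{e_*})_k$.

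Because $\bigcup_{j\in J_0}M(j)=[n]$, we conclude that $\mathbf{b}_i=\gamma_iA\mathbf{e_*}$. Then $c_i=\mathbf{e}_*^\top\mathbf{b}_i=\gamma_i\mathbf{e}_*^\top A\mathbf{e_*}$, so $B_i=\gamma_iB$, which proves extremality.

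The main obstacle is the sign/summation step at indices outside the support, which is exactly where (\ref{20_11}) is needed. There one must check carefully that $\mathbf{e}_k^\top B\bar\by^j=0$ holds precisely when $k\in M(j)$. This follows from (\ref{19_1})-type block identities: $B\bar\by^j$ has first block $A\btau^j/\mu_j$. Note also that $J_0\neq\emptyset$ is implicit in (\ref{20_11}).
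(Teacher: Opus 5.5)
Your proof is correct and follows essentially the paper's route: extremality of $A$ fixes the leading blocks as $\gamma_i A$, and since $\bar{\by}^j$, $j\in J_0$, is a zero of both summands, the nonnegative quantities $\bar{\bf e}_k^\top B_i\bar{\by}^j$ must vanish wherever $\bar{\bf e}_k^\top B\bar{\by}^j=0$; this pins down the last column on $\bigcup_{j\in J_0}M(j)=[n]$ and then the corner entry. Your extra midpoint-of-the-segment step is harmless but redundant, because ${\rm supp}(\btau^j)\subset M(j)$ and row $n+1$ are already handled by that same sign argument (the paper gets the corner entry from $\bd=\alpha A{\bf e}_*$ together with ${\bf e}_*^\top A\btau^j=0$).
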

{\bf Proof.} Notice that it follows from Proposition \ref{19P_1} that $B\in {\mathbb{COP}}(n+1)$ and
the vectors $\bar{\btau}^j, j \in J,$ ${\bar \by}^j,j \in J_0,$ defined in (\ref{zerotau}) form the set of all normalized minimal zeros of the matrix $B$.

By construction,
\begin{equation} B\bar{\btau}^j=\begin{pmatrix}
A{\btau}^j\cr
{\bf{e}}^\top_* A{\btau}^j\end{pmatrix} \ \forall j \in J;\
 B {\bar \by}^j=\frac{1}{\mu_j}\begin{pmatrix}
A{\btau}^j\cr
{\bf{e}}^\top_* A{\btau}^j\end{pmatrix} \ \forall j \in J_0.\label{101}\end{equation}
Hence, taking into account (\ref{19_3}), we have
\begin{eqnarray}  & {\bar {\bf e}}^\top _kB\bar{\btau}^j=0\ \forall k \in M(j),\ \forall j \in J\setminus J_0;\label{21_1}\\
&{\bar {\bf e}}^\top _kB\bar{\btau}^j={\bar {\bf e}}^\top _kB {\bar \by}^j=0\ \forall k \in M(j)\cup\{n+1\},\ \forall j \in J_0,\label{21_2}\end{eqnarray}
where   for $k \in [n+1],$   ${\bar {\bf e}}_k$  denotes the
$k$-th standard basis vector in $\mathbb R^{n+1}.$

Let $\bar D=\begin{pmatrix}
D & {\bf d}\cr
{\bf d}^\top & d_*\end{pmatrix}\in {\mathbb{COP}}(n+1)$ and $\bar C=\begin{pmatrix}
C & \bc \cr
\bc^\top & c_*\end{pmatrix}\in {\mathbb{COP}}(n+1)$, where  the matrices  $D\in {\mathbb{S}}(n)$, $C\in {\mathbb{S}}(n)$,
 and the vectors $\bc\in \mathbb R^{n}$ and $\bd\in \mathbb R^{n}$,  be such that
$$B=\begin{pmatrix}
A & A\bf{e}_*\cr
{\bf{e}}^\top_*  A &\ {\bf{e}}^\top_* A\bf{e}_*\end{pmatrix}=\bar D+\Bar C,$$
or equivalently
$$ A=D+C,\ \bd+\bc=A{\bf{e}_*},\ d_*+c_*={\bf{e}}^\top_*A{\bf{e}}_*.$$
Since $D \in {\mathbb{COP}}(n),$ $C \in {\mathbb{COP}}(n),$ and, by assumption,  $A$ is an extremal
copositive matrix,   it follows from the equality
$A=D+C$ that
$$D=\alpha A, \ C=(1-\alpha)A \ \mbox{ with some } \ \alpha\in [0,1].$$

From the equality $B=\bar D+\bar C$ and  the inclusions $B\in {\mathbb{COP}}(n+1)$,
$\bar D\in {\mathbb{COP}}(n+1)$,  {and} $\bar C\in {\mathbb{COP}}(n+1)$, one may conclude that the vectors $\bar{\btau}^j, \ j \in J,$  {and} ${\bar \by}^j, \ j \in J_0,$  must be zeros of  both  matrices  $\bar D$ and $\bar C$. This implies that
$$\mathbf{\bar e}^\top_k \bar D {\bar \by}^j\geq 0,\ \mathbf{\bar e}^\top_k \bar C {\bar \by}^j\geq 0 \ \forall k\in [n+1], \ \forall j \in J_0.$$
Taking into account  {the} inequalities  above, along with the equalities (\ref{21_2}), we obtain
$$\mathbf{\bar e}^\top_k\bar D {\bar \by}^j=0 ,\ \mathbf{\bar e}^\top_k\bar C {\bar \by}^j=0\ \forall k \in  M(j)\cup\{n+1\},\ \forall j \in J_0.$$

  Next, noting  that $\mathbf{e}^\top_k A{\btau}^j=0$, $ k \in {M(j)},$ for all $j \in J_0$ and  using  $D=\alpha A,$  we  calculate
$$\mu_j\mathbf{\bar e}^\top_k\bar D {\bar \by}^j=\mathbf{\bar e}^\top_k
\begin{pmatrix}
\alpha A & \bd\cr
\bd^\top & d_*\end{pmatrix}
\begin{pmatrix}
{\btau}^j-\sigma_j\bf{e}_*\cr
\sigma_j\end{pmatrix}=(\alpha \mathbf{e}^\top_kA, \,\mathbf{e}^\top_k \bd)\begin{pmatrix}
{\btau}^j-\sigma_j\bf{e}_*\cr
\sigma_j\end{pmatrix}$$
\begin{equation} = (-\alpha\sigma_j\mathbf{e}^\top_kA{\bf{e}_*}+\sigma_j\mathbf{e}^\top_k\bd)=0\ \  \forall k \in M(j),\ \forall j \in J_0,\label{21_3}\end{equation}
\begin{equation} \mu_j\mathbf{\bar e}^\top_{n+1}\bar D {\bar \by}^j=(\bd^\top {\btau}^j-\sigma_j\bd^\top {\bf{e}_*}+\sigma_jd_*)=0 \ \forall j \in J_0.\label{21_4}\end{equation}
It then follows from (\ref{21_3}) that
$$\mathbf{e}^\top_k\bd=\alpha \mathbf{e}^\top_kA{\bf{e}_*} \ \; \forall k \in \bigcup\limits_{j\in J_0}M(j).$$
From these equalities and {condition (\ref{20_11})},
 we  conclude  that $\bd=\alpha A{\bf{e}_*},$  and consequently (see (\ref{19_3})), $\bd^\top {\btau}^j=0$ for all
$j\in J_0.$
Then it follows from (\ref{21_4}) that $d_*=\bd^\top {\bf{e}_*}=\alpha \bf{e}^\top_*A{\bf{e}_*}.$
Thus, we have shown that $\bar D=\alpha B.$

In a similar way, one can show that $\bar C=(1-\alpha) B.$ By definition,  {it follows} that $B$ is extremal. $\ \Box$

\begin{proposition}\label{Pcond}  Suppose that a copositive matrix $A\in \cop$ and a set $I\subset [n],$ $I\not=\emptyset,$ be such that the following conditions are satisfied:

a) $Z(A)=Z_{min}(A)=\{{\btau}^j, j \in J\}$;

b) ${\rm supp}({\btau}^j)\cup {\rm supp}(A{\btau}^j)=[n] \ \ \forall j \in J;$

c) matrix $A$  generates  an extreme ray of the cone $\cop;$

d) $\bigcup\limits_{j\in J_0} {\rm supp}({\btau}^j)=[n] $, where $J_0=\{j \in J: I\subset {\rm supp}({\btau}^j)\}.$\\
Then the matrix $B=B(A,I)$ constructed by formula (\ref{B_matrix}),  generates an  extreme exposed ray of the cone ${\mathbb{COP}}(n+1).$
\end{proposition}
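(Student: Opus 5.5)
The plan has two parts: extremality of $B$ via Proposition \ref{P_extreme}, and exposedness via the criterion from \cite{KT-Mathematics} already used in the proof of Theorem \ref{T5}. That criterion says an extremal copositive matrix with finitely many minimal zeros generates an exposed ray as soon as every minimal zero $\bt$ satisfies ${\rm supp}(\bt)\cup{\rm supp}(B\bt)$ equal to the full index set. In terms of the sets $M(\cdot)$ this reads ${\rm supp}(\bt)=[n+1]\setminus{\rm supp}(B\bt)$, which is the analogue of (\ref{eq2}).

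\emph{Step 1: extremality.} By a), Proposition \ref{P3} applies. The normalized minimal zeros of $B$ are $\bar\btau^j$ for $j\in J$ and $\bar\by^j$ for $j\in J_0$, and $Z(B)$ is given by (\ref{zeroset}). To invoke Proposition \ref{P_extreme} I must check (\ref{20_11}). By (\ref{20_10}), ${\rm supp}(\btau^j)\subset M(j)$, so d) gives $[n]=\bigcup_{j\in J_0}{\rm supp}(\btau^j)\subset\bigcup_{j\in J_0}M(j)$. Together with c), Proposition \ref{P_extreme} then shows that $B$ is extremal in $\mathbb{COP}(n+1)$.

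\emph{Step 2: the support condition for the minimal zeros of $B$.} Condition b) says that $M(j)={\rm supp}(\btau^j)$ for all $j\in J$. I use (\ref{101}) throughout.

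For $j\in J\setminus J_0$ we have $B\bar\btau^j=(A\btau^j;\ {\bf e}_*^\top A\btau^j)$, so ${\rm supp}(\bar\btau^j)\cup{\rm supp}(B\bar\btau^j)\supset[n]$. It remains to show ${\bf e}_*^\top A\btau^j>0$. Since $j\notin J_0$, some $k\in I$ lies outside ${\rm supp}(\btau^j)$. By b), $(A\btau^j)_k>0$, and every component of $A\btau^j$ indexed by $I$ is nonnegative (it is either on the support, hence zero, or off it, hence nonnegative by b)). So the sum over $I$ is positive and index $n+1$ is covered.

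For $j\in J_0$, index $n+1$ already lies in ${\rm supp}(\bar\btau^j)$ only through $\bar\by^j$, so the two zeros must be checked separately.

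For $\bar\btau^j$ with $j\in J_0$, ${\bf e}_*^\top A\btau^j=0$ by (\ref{19_3}), so the coordinate $n+1$ lies in neither support. The criterion therefore cannot hold in its naive form for $\bar\btau^j$ itself.

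For $\bar\by^j$ the $(n+1)$-st coordinate is $\sigma_j>0$, which is fine. However, its support on $[n]$ is $Q(j)\cup\Delta I(j)$, which misses the indices of $I\setminus Q(j)$. These indices lie in ${\rm supp}(\btau^j)=M(j)$, so $B\bar\by^j$ also vanishes there.

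\emph{Main obstacle.} The obstacle is therefore the zeros indexed by $J_0$. The naive support condition fails for them, so I would use the refined characterization in \cite{KT-Mathematics}. That version is stated in terms of the sets ${\cal J}(\bt,B)$ and $M_*(\cdot)$ from (\ref{calJ}): exposedness of an extremal matrix holds iff ${\cal J}(\bt,B)=[n+1]\setminus{\rm supp}(B\bt)$, i.e.\ $M_*(j)=M(j)$, for every minimal zero.

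I would compute the maximal cliques of the minimal zeros graph of $B$ from (\ref{zeroset}). These are the segments $\{\bar\btau^j,\bar\by^j\}$ for $j\in J_0$ and the singletons for $j\notin J_0$. For a pair $j\in J_0$, Proposition \ref{P_02_02} then gives
\[
{\cal J}(\bar\btau^j,B)={\cal J}(\bar\by^j,B)={\rm supp}(\btau^j)\cup\{n+1\}.
\]
On the other hand, $[n+1]\setminus{\rm supp}(B\bar\by^j)=M(j)\cup\{n+1\}$, and the two coincide by b). The singletons for $j\notin J_0$ are handled by the computation above.

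The delicate point is to state the cited criterion exactly, and to confirm that no other edges arise between different $j$'s in $G(B)$. For that I would use $(\bar\btau^i)^\top B\bar\by^j=(\btau^i)^\top A\btau^j/\mu_j$, which is positive for $i\ne j$ since $Z(A)$ is finite.
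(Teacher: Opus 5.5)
Your proposal is correct and follows the paper's route. You get extremality of $B$ from Proposition \ref{P_extreme}, with (\ref{20_11}) obtained from d) and (\ref{20_10}), and exposedness from the identity ${\cal J}(\bar\bt,B)=[n+1]\setminus{\rm supp}(B\bar\bt)$ for every minimal zero $\bar\bt$ of $B$; this is exactly the paper's (\ref{102}), computed from (\ref{100}), (\ref{101}) and condition b) as you do. The criterion you were unsure how to state is what the paper extracts from Theorems 17 and 19 of \cite{local}: under (\ref{102}), the exposedness system (\ref{106})--(\ref{108}) coincides with the extremality system (\ref{104}). So only the sufficiency direction is needed, and you need not claim an \emph{if and only if}; your explicit check that the only edges of $G(B)$ are $\{\bar\btau^j,\bar\by^j\}$, $j\in J_0$, supplies a step the paper leaves implicit.
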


{\bf Proof.} It follows from the definition (\ref{Mj})  of the sets $M(j)$, together with condition $b)$,
that $M(j)=[n]\setminus {\rm supp}(A{\btau}^j)={\rm supp}({\btau}^j)$ for all $j\in J.$ Hence, under the assumptions of
this proposition, all the hypotheses of Proposition \ref{P_extreme} are  satisfied.
Consequently, the matrix $B$  generates an extreme ray of  the cone ${\mathbb{COP}}(n+1).$

Note that it follows from condition $b)$ and the definition of the set $J_0$ that
\begin{equation} {\bf{e}}^\top_*A{\btau}^j>0\ \forall j \in J\setminus J_0, \ \ {\bf{e}}^{\top}_*A{\btau}^j=0\ \forall j \in  J_0.\label{100}\end{equation}
It then  follows  from equalities (\ref{101}) and  relations (\ref{100})  that,  for the  matrix $B=B(A,I)$ under consideration and  {the} vectors  $\bar {\btau}^j$, $j\in J, $ and $ {\bar \by}^j, j\in  J_0, $ defined in (\ref{zerotau}), we have
$${\cal J}(\bar {\btau}^j, B)={\rm supp}({\btau}^j),\ {\rm supp}(B\bar{\btau}^j)={\rm supp}(A{\btau}^j)\cup\{n+1\} \ \forall j \in J\setminus J_0;$$
$${\cal J}(\bar {\btau}^j, B)={\cal J}( {\bar \by}^j, B)={\rm supp}({\btau}^j)\cup\{n+1\},$$
$$ {\rm supp}(B\bar{\btau}^j)={\rm supp}(B {\bar \by}^j)={\rm supp}(A{\btau}^j) \ \forall j \in  J_0.$$
Here, for a given copositive matrix $B$ and its minimal zero $\bar \btau $, the  index set $ {\cal J}(\bar \btau , B)$ is defined by
   (\ref{30-1}) (see also  (\ref{calJ})).

It follows from the equalities above and   the assumption   $b)$  of this proposition,   that
\be\begin{split} &{\cal J}(\bar {\btau}^j, B)=[n+1]\setminus {\rm supp}(B\bar{\btau}^j) \ \forall j \in J;\\
&{\cal J}( {\bar \by}^j, B)=[n+1]\setminus {\rm supp}(B {\bar \by}^j) \ \forall j \in J_0.\end{split}\label{102}\end{equation}

Since the matrix $B$ is an extreme ray of the cone ${\mathbb{COP}}(n+1)$, it follows from Theorem 17 in \cite{local} that all solutions $\bar D\in {\mathbb{S}}(n+1)$ of
the following system:
\be\begin{split} & {\bar {\bf  e}}^\top_k \bar D\bar{\btau}^j=0\ \forall k \in [n+1]\setminus {\rm supp}(B\bar {\btau}^j),\ \forall j \in J;\\
& {\bar {\bf  e}}^\top_k \bar D{\bar \by}^j=0\ \forall k \in [n+1]\setminus {\rm supp}(B{\bar \by}^j),\ \forall j \in J_0,\label{104}\end{split}\end{equation}
have the form $\bar D=\alpha B.$

 Theorem 19 in \cite{local}  states  that $B\in{\mathbb{COP}}(n+1)$ is an exposed  ray if  and only if
all solutions $\bar D\in {\mathbb{S}}(n+1)$ of
the  following  system:
\be\begin{split}&  {\bar {\bf  e}}^\top_k \bar D\bar {\btau}^j=0\ \forall k \in {\cal J}(\bar {\btau}^j, B),\  {j \in J};\\
& {\bar {\bf  e}}^\top_k \bar D{\bar \by}^j=0\ \forall k \in {\cal J}({\bar \by}^j, B),\ \forall j \in J_0,\label{106}\end{split}\end{equation}
\be\begin{split}&  {\bar {\bf  e}}^\top_k \bar D\bar {\btau}^j\geq 0\ \forall k \in [n+1]\setminus {\rm supp}(B\bar {\btau}^j)\ \forall j \in J;\\
& {\bar {\bf  e}}^\top_k \bar D{\bar \by}^j\geq 0\ \forall k \in [n+1]\setminus {\rm supp}(B{\bar \by}^j)\ \forall j \in J_0,\end{split}\label{108}\end{equation}
have the form $\bar D=\alpha B.$
Taking into account
equalities (\ref{102}),
 we conclude that  system (\ref{106}),  (\ref{108})  has the same solution set as system
 (\ref{104}),  which  has solutions only in the form $\bar D=\alpha B.$

Thus, we have shown that all solutions of the system   (\ref{106}),  (\ref{108}) have the form $\bar D=\alpha B$,
and consequently, the matrix $B$   generates  an exposed ray of the cone ${\mathbb{COP}}(n+1).$  $\ \Box$

\vspace{2mm}

 The following corollary is  an  evident consequence of Propositions \ref{P_extreme} and \ref{Pcond}.

\begin{corollary}\label{CC2}  Let $A\in \cop$ and $I\subset [n]$ with  $|I|\geq 2$  be  as in Propositions \ref{P_extreme} (Proposition \ref{Pcond}). Then, for any $\bar I\subset I,$ the  matrix $B(A,\bar I)\in \mathbb{COP}$ is extremal (respectively,  exposed).
\end{corollary}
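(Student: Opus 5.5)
The idea is to check that every hypothesis of Propositions \ref{P_extreme} and \ref{Pcond}, when stated for $I$, still holds for any nonempty $\bar I\subset I$, and then to apply those propositions with $\bar I$ in place of $I$. (We take $\bar I\neq\emptyset$, as the construction (\ref{B_matrix}) requires.) The whole argument rests on one monotonicity observation about the index set (\ref{**J}): if $\bar I\subset I$, then $I\subset{\rm supp}(\btau^j)$ implies $\bar I\subset{\rm supp}(\btau^j)$. Hence
$$J_0(A,I)\subset J_0(A,\bar I).$$

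\textbf{Extremal case.} The conditions that $A$ is extremal and that $Z(A)=Z_{min}(A)$ is finite do not involve $I$ at all. The only hypothesis that depends on $I$ is the covering condition (\ref{20_11}). By the inclusion above,
$$[n]=\bigcup_{j\in J_0(A,I)}M(j)\subset\bigcup_{j\in J_0(A,\bar I)}M(j)\subset[n],$$
so (\ref{20_11}) holds for $\bar I$. Proposition \ref{P_extreme}, applied with $\bar I$, then shows that $B(A,\bar I)$ is extremal in $\mathbb{COP}(n+1)$.

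\textbf{Exposed case.} Conditions a), b) and c) of Proposition \ref{Pcond} do not involve $I$. Condition d) is a covering condition of the same type as (\ref{20_11}), with ${\rm supp}(\btau^j)$ in place of $M(j)$. The same chain of inclusions, applied to the sets ${\rm supp}(\btau^j)$, gives
$$\bigcup_{j\in J_0(A,\bar I)}{\rm supp}(\btau^j)=[n].$$
Proposition \ref{Pcond}, applied with $\bar I$, then shows that $B(A,\bar I)$ generates an extreme exposed ray.

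\textbf{Where care is needed.} The only place that requires attention is confirming that nothing in the proofs of those propositions secretly depends on $|I|$ beyond $I\neq\emptyset$. One such quantity is $\mu_j=1-\sigma_j(|\bar I|-1)$, which must be positive. This holds because $\sigma_j|\bar I|\le\sum_{k\in\bar I}\tau^j_k\le1$, so $\sigma_j(|\bar I|-1)<1$ (note $\sigma_j>0$ whenever $j\in J_0(A,\bar I)$). The second point is relation (\ref{100}), i.e. that ${\bf e}_*^\top A\btau^j>0$ for $j\notin J_0$. With $\bar{\bf e}_*:=\sum_{i\in\bar I}{\bf e}_i$, this follows from b) exactly as before: if $\bar I\not\subset{\rm supp}(\btau^j)$, then some $i\in\bar I$ lies in ${\rm supp}(A\btau^j)$, and all entries of $A\btau^j$ are nonnegative. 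Both facts were derived inside the proofs for an arbitrary nonempty $I$, so they transfer verbatim, and the corollary follows.
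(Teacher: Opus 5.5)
Your proof is correct and is the argument the paper leaves implicit when it calls the corollary an evident consequence of Propositions \ref{P_extreme} and \ref{Pcond}. The inclusion $J_0(A,I)\subset J_0(A,\bar I)$ preserves the covering conditions (\ref{20_11}) and d), and the remaining hypotheses do not involve $I$. Your extra checks of $\mu_j>0$ and of (\ref{100}) for $\bar I$ are valid; they are already covered by the propositions being stated for an arbitrary nonempty $I$.
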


\begin{remark}  In Propositions \ref{P3}-\ref{Pcond}, {the}
assumption that $Z(A)=Z_{min}(A)$ can be relaxed   by using a   corresponding new definition of the set $J_0$.
\end{remark}

If $|I|=1,$ then Propositions \ref{P_extreme} and \ref{Pcond}   follow from  Theorem 3.8 in \cite{Baumert}.
 Note that  for a given extremal/exposed matrix  $A\in \cop$, the application of  these propositions with different sets $I\subset  [n]$,
 {allows us to construct } more {distinct} extremal/exposed matrices in $\mathbb{COP}(n+1)$ in the form $B(A,I)$,
 than by applying Theorem 3.8 in \cite{Baumert}.
 In general,  matrices $B(A,I)$ and  $B(A,\bar I)$ with $|I|\neq |\bar I|$ will   possess different numbers
 of normalized minimal zeros and, consequently,   distinct   sets of normalized zeros.

\subsection{  Estimation  of $low(n)$  for  even $n$ }

Now we are ready to prove the  second  main result of this paper.

\begin{theorem}\label{Teven}
For  an  even $\bar n\geq 6,$  the following estimate holds: $$n\leq low(n) \leq n+3.$$
 \end{theorem}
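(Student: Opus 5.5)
The lower bound $n\le low(n)$ is exactly Lemma \ref{L2}, so only the upper bound $low(n)\le n+3$ needs proof. For even $n\ge 6$ I would set $m:=n-1$, which is odd and satisfies $m\ge 5$. I would take the Hildebrand matrix ${\mathcal A}\in\mathbb{COP}(m)$ from (\ref{AA}) and lift it to $\mathbb{COP}(n)$ via $B=B({\mathcal A},I)$ from (\ref{B_matrix}), with a suitably chosen index set $I\subset[m]$. Proposition \ref{Pcond} will show that $B$ generates an exposed extreme ray of $\mathbb{COP}(n)$. Theorem \ref{T1} then makes ${\mathcal F}:=\cp\cap B^\bot$ a maximal face, and Corollary \ref{OO_L} lets me compute $\dim{\mathcal F}$ and bound it by $n+3$.

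\textbf{Verifying the hypotheses of Proposition \ref{Pcond}.} The proof of Theorem \ref{T5} already gives most of what is needed:
\begin{itemize}
\item condition a) holds because $Z({\mathcal A})$ consists of the $m$ minimal zeros ${\btau}^j$ with ${\rm supp}({\btau}^j)=I(j)$;
\item condition b) is precisely (\ref{eq2});
\item condition c) is the extremality of ${\mathcal A}$.
\end{itemize}
It remains to choose $I$ so that d) holds, namely $\bigcup_{j\in J_0}I(j)=[m]$ with $J_0=\{j:I\subset I(j)\}$. Each $I(j)$ omits one cyclically consecutive pair, $\{m-j,m-j+1\}$ or $\{1,m\}$. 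So $J_0$ consists of those $j$ whose omitted pair avoids $I$, and d) says every index lies outside at least one such omitted pair.

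Taking $I$ to be a single index, say $I=\{1\}$, gives $|J_0|=m-2$, since exactly the two pairs containing $1$ are excluded. These $m-2\ge 3$ pairs clearly do not cover a common index, so d) holds.

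\textbf{Computing the dimension.} By Proposition \ref{P3}, the minimal zeros of $B$ are $\bar{\btau}^j$ for $j\in[m]$ and ${\bar\by}^j$ for $j\in J_0$. Since $Z(B)$ is the union of the segments ${\rm conv}\{\bar{\btau}^j,{\bar\by}^j\}$ together with isolated points, the maximal cliques of $G(B)$ are the edges $\{\bar{\btau}^j,{\bar\by}^j\}$ for $j\in J_0$ and the singletons for $j\notin J_0$. Corollary \ref{OO_L} then gives
$$\dim{\mathcal F}={\rm rank}\bigl(\bar{\btau}^j(\bar{\btau}^j)^\top\ (j\in[m]),\ {\bar\by}^j({\bar\by}^j)^\top,\ (\bar{\btau}^j+{\bar\by}^j)(\bar{\btau}^j+{\bar\by}^j)^\top\ (j\in J_0)\bigr).$$
Rescaling ${\bar\by}^j$, each vector is a combination of $\bar{\btau}^j$ and $\bar{\bf f}:=(-{\bf e}_*^\top,1)^\top$, because ${\bar\by}^j\propto \bar{\btau}^j+\sigma_j\bar{\bf f}$. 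The span is therefore contained in
$${\rm span}\{\bar{\btau}^j(\bar{\btau}^j)^\top,\ \bar{\btau}^j\bar{\bf f}^\top+\bar{\bf f}(\bar{\btau}^j)^\top,\ \bar{\bf f}\bar{\bf f}^\top\}.$$
This gives the crude bound $m+|J_0|+1$, which is far too large. That crude count is the main obstacle: the rank must collapse to at most $m+4=n+3$.

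\textbf{Getting the rank down.} The extra matrices $\bar{\btau}^j\bar{\bf f}^\top+\bar{\bf f}(\bar{\btau}^j)^\top$ have span of dimension $\le{\rm rank}\{{\btau}^j,\ j\in J_0\}$, so with $|J_0|\approx m$ and a single-index $I$ the rank could be about $2m$. To cut this down I would instead choose $I$ large. I would take $I$ to be a near-maximal set such that only about three or four omitted pairs avoid it, while d) still holds.

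For $m=2q+1$, a natural choice is $I=[m]\setminus\{a,a+1,b,b+1,\dots\}$ with three or four well-separated consecutive pairs remaining, so that $|J_0|\le 3$ and these pairs are disjoint, which gives d). The contribution of the ${\bar\by}^j$ terms is then at most $|J_0|+1\le 4$, and $\dim{\mathcal F}\le m+|J_0|+1$. Choosing $|J_0|=3$ gives exactly $m+4=n+3$.

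The delicate part is d). Each index must avoid some pair in $J_0$, and an index lying in two of the chosen pairs would be forbidden, but disjoint pairs automatically give d) since $|J_0|\ge2$. Once $|J_0|=3$ disjoint pairs are chosen, which is possible because $m\ge5$ allows three disjoint consecutive pairs only when $m\ge6$, the bound $low(n)\le n+3$ follows. For $m=5$ I would use a slightly different count or use $|J_0|=2$ with a check of d).
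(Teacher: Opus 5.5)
\textbf{Gap: the choice of $I$ for small $m$.} Your strategy is the paper's. You lift Hildebrand's matrix $\mathcal{A}\in\mathbb{COP}(m)$, $m=n-1$, to $B(\mathcal{A},I)$, get an exposed ray from Proposition \ref{Pcond} and a maximal face from Theorem \ref{T1}, and bound its dimension by $m+|J_0|+1$ via Corollary \ref{OO_L}. Everything then rests on choosing a nonempty $I$ with $|J_0|\le 3$ that satisfies condition d), and that choice is where the proposal breaks.

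You misread d). An index lying in two of the omitted pairs is harmless; d) fails only for an index lying in \emph{every} omitted pair $\{m-j,m-j+1\}$, $j\in J_0$. Because you insist on disjoint, well-separated pairs, the small cases fail:
\begin{itemize}
\item For $m=5$ ($n=6$) the $5$-cycle has no three disjoint edges. Your fallback $|J_0|=2$ is also impossible: two disjoint edges cover four vertices, which induce a path with three edges, so $J_0$ automatically has three elements.
\item For $m=7$ ($n=8$) three disjoint edges cannot be pairwise non-adjacent; that needs $m\ge 9$, not $m\ge 6$. The complement of $I$ is then a path on six vertices, so $|J_0|=5$ and you only get $n+5$.
\end{itemize}
As written, the argument proves $low(n)\le n+3$ only for $n\ge 10$.

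\textbf{Repair.} Use the paper's choice $I=[m-4]$. Then $J_0=\{1,2,3\}$, and the omitted pairs $\{m-1,m\}$, $\{m-2,m-1\}$, $\{m-3,m-2\}$ overlap but have empty common intersection. Hence $I(1)\cup I(2)\cup I(3)=[m]$, d) holds for every odd $m\ge 5$, and $\dim\mathcal{F}\le m+4=n+3$. For $m=5$ this is exactly your discarded choice $I=\{1\}$, for which $|J_0|=m-2=3$. For $n=6$ you could instead cite Theorem \ref{T4}, which already gives $low(6)\le 7$.

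\textbf{Your two-pair idea does work once $m\ge 7$.} For instance, take $I=[m]\setminus\{1,2,4,5\}$. Then $J_0$ consists of the two indices whose omitted pairs are $\{1,2\}$ and $\{4,5\}$. These pairs are disjoint, so d) holds, and the same count gives $\dim\mathcal{F}\le m+3=n+2$. This is sharper than the bound stated in the theorem.
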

{\bf Proof.}  Let $\bar n$ be an even integer  number, $\bar n\geq 6.$  Set $n:=\bar n-1$ and consider a
matrix ${\mathcal{A}}\in \cop$  whose  entries  are  defined by  the rules
(\ref{AA}).  As it was shown above, the matrix  ${\mathcal{A}}$   {generates} an exposed ray  of the cone $ \cop$  and
$$Z( \mathcal{A})=Z_{min}(\mathcal{A})=\{{\btau}^j, \, j \in J\} \mbox{ with } J=[n],$$
$${\rm supp}({\btau}^j)=I(j), \ \ {\rm supp}({\btau}^j)\cup {\rm supp}({\mathcal{A}}{\btau}^j)=[n] \  \ \forall j \in [n],$$
with  {the} index sets $I(j)$, $j\in [n],$ defined in (\ref{sets}).

  If we set $I:=\ [n-4]$, then   the set $J_0$ defined in (\ref{**J}) has the form
$$J_0:=\{j\in [n]:I\subset {\rm supp}({\btau}^j)\}=\{1, 2, 3\}$$
 and it is easy to verify that  $\bigcup\limits_{j\in J_0}{\rm supp}({\btau}^j)=[n].$

Thus, we see that, for the  matrix ${\mathcal{A}}$ and  the  set $I$, all  the   assumptions of Proposition \ref{Pcond} are  satisfied.
 Consequently, the corresponding matrix $B=B({\mathcal{A}},I)$  generates  an
exposed ray of the cone ${\mathbb{COP}}(\bar n)$, the vectors $\bar {\btau}^j,j\in J,$ $ {\bar \by}^j, j \in J_0,$ defined in  (\ref{zerotau}), are normalized
 minimal zeros of $B$, and the set of all  normalized zeros $Z(B)$ is  given by  (\ref{zeroset}).

Consider a face ${\cal F}:={\mathbb{CP}}(\bar n)\cap B^\bot$  of the cone ${\mathbb{CP}}(\bar n).$ Since $B$
  generates an exposed ray of ${\mathbb{COP}}(\bar n)$,  Theorem \ref{T1}  implies  that  ${\cal F}$
is a maximal face of the cone ${\mathbb{CP}}(\bar n).$
  {It follows from}  Corollary \ref{OO_L} and  the  representation (\ref{zeroset}) of the set $Z(B)$ that
the dimension of the face ${\cal F}$ is equal to
$$k_0:={\rm rank} \bigl(\bar {\btau}^j(\bar {\btau}^j)^\top \!\!, \, j \in J, \,  {\bar \by}^j({\bar \by}^j)^\top \!\!, \,
(\bar {\btau}^j + {\bar \by}^j)(\bar {\btau}^j + {\bar \by}^j)^\top \!\!, \, j \in J_0\bigl).$$
Taking into account the specific structure of the vectors $\bar {\btau}^j, j \in J,$ ${\bar \by}^j, j \in J_0,$ and equality (\ref{eq1}),
 it is easy to see that
\begin{equation*}\begin{split}k_0=&{\rm rank}\bigl(\bar {\btau}^j(\bar {\btau}^j)^\top \!, \, j \in J, \,(\bar {\btau}^j{\bar {\bf e}}^\top_*+\bar {\bf e}_*(\bar{\btau}^j)^\top),\, j \in J_0,\
\bar {\bf e}_*{\bar {\bf e}}^\top_*\bigl)\\
=&|J|+|J_0|+1=n+3+1=\bar n+3,\end{split}\end{equation*}
where ${\bar {\bf e}}^\top_*:=({\bf e}^\top_*, -1).$
Hence, the tight lower bound on the dimensions of maximal faces of the ${\mathbb{CP}}(\bar n)$ is less  than  or equal \ to  $\bar n+3.$
 The statement of this theorem follows from this inequality and
Lemma \ref{L2}. $\ \Box$

\vspace{1mm}

\begin{remark}  Note that if we  construct the  matrix $B({\mathcal{A}},I)$  based on  the matrix  ${\mathcal{A}}$ (defined in (\ref{AA}))
using Theorem 3.8 in \cite{Baumert},    that is,  with  any $I\subset [n]$   satisfying  $|I|=1,$
 we would obtain a  coarser  estimate for $low(\bar n)$, namely
$low(\bar n)\leq 2\bar n-3.$  For  even  $\bar n \geq 8,$ this  estimate  improves  upon  that obtained   in  \cite{Zhang},  yet  it  remains weaker
than the estimates    established  above  in this paper.
 This   reveals  one of the advantages of Propositions \ref{P_extreme} and \ref{Pcond}   compared to  Theorem 3.8 in \cite{Baumert}. 

\end{remark}

Recall that in  \cite{Zhang}, the authors show that  the  minimal  dimensions of
 maximal faces of the ${\mathbb{CP}}(\bar n)$,  for $\bar n\geq 6,$ cannot  exceed  $$estim^*(\bar n):=(\bar n^2 -5\bar n+8)/2.$$
It is easy to see that, for even $\bar n\geq 8$,
 our  upper  estimation, $estim(\bar n),$  $$estim(\bar n):=\bar n+3,$$ is  sharper  than  $estim^*(\bar n)$, since $estim^*(\bar n)-estim(\bar n)=
\frac{1}{2}(\bar n^2-7 \bar n+2)>0.$

\vspace{3mm}

Unfortunately, for  an  even  $\bar n\geq 6,$ we   were not able to determine  the exact value  of
 the  tight lower bound on the dimensions  of maximal faces of the cone ${\mathbb{CP}}(\bar n)$.
 However,  we have obtained  fairly sharp   upper estimates for this value.

 In fact,   Theorem \ref{Teven} shows  that  for the tight lower bound on
 the dimensions of maximal faces of the ${\mathbb{CP}}(\bar n)$, the difference between
its upper   and  lower estimates is equal to $3$ for all even $\bar n\geq 6.$

 It follows from Theorems \ref{T5} and \ref{Teven} that
the  tight lower bound on the dimensions of maximal faces of the cone ${\mathbb{CP}}(\bar n)$
  grows  linearly  (and  not quadratically)  with respect to   $\bar n.$

\section{Conclusion}\label{S.5}

In this paper, we investigated maximal faces of the cone of completely positive matrices and established new results concerning their dimensions.
Our approach is based on an explicit construction of maximal faces of the completely positive cone using extremal matrices of the dual copositive cone.

For all odd dimensions $n \ge 5$, we prove that the tight lower bound on the dimensions of  maximal faces of the cone $\cp$ equals $n$.
For even dimensions $n \ge 6$, we show that the tight lower bound satisfies
\[
n \le low(n) \le n+3.
\]
These results provide substantially sharper bounds than those previously available and offer a clearer understanding of the structure of maximal faces of the completely positive cone.

An open problem remains: to determine the exact value, or to obtain a sharper estimate, of the tight lower bound on the dimensions of maximal faces of the cone $\mathbb{CP}(n)$ for even $n \ge 6$.

%\appendix

%\section{Sample Appendix Section}
%\label{sec:sample:appendix}
%Lorem ipsum dolor sit amet, consectetur adipiscing elit, sed do eiusmod tempor section \ref{sec:sample1} incididunt ut labore et dolore magna aliqua. Ut enim ad minim veniam, quis nostrud exercitation ullamco laboris nisi ut aliquip ex ea commodo consequat. Duis aute irure dolor in reprehenderit in voluptate velit esse cillum dolore eu fugiat nulla pariatur. Excepteur sint occaecat cupidatat non proident, sunt in culpa qui officia deserunt mollit anim id est laborum.

%% If you have bibdatabase file and want bibtex to generate the
%% bibitems, please use
%%
 %\bibliographystyle{elsarticle-num}
 %\bibliography{cas-refs}

\end{document}